





\documentclass[11pt,leqno]{article}
\usepackage{amsmath,amssymb,amsthm,nicefrac,mathrsfs}
\usepackage[colorlinks]{hyperref}
\bibliographystyle{plain}
\usepackage{graphicx}
\usepackage[caption=false]{subfig}
\usepackage{tabu}
\usepackage{bbm}
\usepackage{hyperref}
\usepackage{indentfirst}
\usepackage{mathrsfs}
\usepackage{physics}
\usepackage{calc}
\usepackage{dutchcal}
\usepackage{mathalpha}
\usepackage{mathtools}
\usepackage{lipsum}
\usepackage{verbatim}
\usepackage[a4paper, margin=4cm, footskip=1cm]{geometry}
\usepackage{caption}
\usepackage[T1]{fontenc}
\usepackage{booktabs}
\usepackage{siunitx}
\usepackage{scalerel,stackengine}

\stackMath
\newcommand\reallywidehat[1]{%
	\savestack{\tmpbox}{\stretchto{%
			\scaleto{%
				\scalerel*[\widthof{\ensuremath{#1}}]{\kern-.6pt\bigwedge\kern-.6pt}%
				{\rule[-\textheight/2]{1ex}{\textheight}}
			}{\textheight}%
		}{0.5ex}}%
	\stackon[1pt]{#1}{\tmpbox}%
}

\providecommand{\keywords}[1]
{
	\small	
	\textbf{\textit{Keywords---}} #1
}

\newcommand{\Addresses}{{
		\bigskip
		\footnotesize

		\noindent 	NGARTELBAYE GUERNGAR \\
		\textsc{Department of Mathematics, University of North Alabama,
			Florence, AL 35832}\\
		\textit{E-mail address}: \texttt{nguerngar@una.edu}\\
		\textit{URL}: \texttt{\url{	http://buildingthepride.com/faculty/nguerngar}}
		
		\medskip

		
		\noindent ERKAN NANE \\
		\textsc{Department of Mathematics and Statistics, Auburn University,
			Auburn, AL 36849}\\
		\textit{E-mail address}: \texttt{ezn0001@auburn.edu}\\
		\textit{URL}: \texttt{\url{http://www.auburn.edu/~ezn0001}}
		
		\medskip

		

		
}}

\parskip 1ex
\addtolength{\hoffset}{-.5in}
\addtolength{\textwidth}{1.0in}

\newtheorem{theorem}{Theorem}[section]

\newtheorem{proposition}[theorem]{Proposition}

\newtheorem{assumption}[theorem]{Assumption}
\newtheorem{remark}[theorem]{Remark}

\numberwithin{equation}{section}








{%

	\begin{enumerate}}%
	{\end{enumerate}
}

\def\be{\begin{equation}}
	\def\ee{\end{equation}}
\def\ba{\begin{aligned}}
	\def\ea{\end{aligned}}
\def\bes{\begin{equation*}}
	\def\ees{\end{equation*}}

\def\bc{\begin{cases}}
	\def\ec{\end{cases}}
\numberwithin{equation}{section}
\everymath{\displaystyle}

\author{Ngartelbaye Guerngar\\
	University of North Alabama\\
	\and Erkan Nane\\ Auburn University\\ 
}
\title{{ Space-time fractional SPDEs with locally Lipschitz coefficients: well-posedness}	
	\date{}
}

\begin{document}
	\maketitle
	
	
	

	
	\begin{abstract}
		\noindent In this article, we study the space-time SPDE 
		$$ \partial_t^\beta u=-(-\Delta)^{\alpha/2} u+I_t^{1-\beta}[b(u)+\sigma(u)\dot{W}],$$
		where $u=u(t,x)$ is defined for $(t,x)\in\mathbb{R}_+\times \mathbb{R},$ $\beta\in(0,1), \alpha\in(0,2)$ and $\dot{W}$ denotes a space-time white noise. 
		It has long been conjectured that this equation has a unique solution with finite moments under the minimal assumptions of locally Lipschitz coefficients $b$ and $\sigma$ with linear growth. 
		We prove that this SPDE is well-posed  under the assumptions that the initial condition $u_0$ is bounded and measurable, and the functions $b$ and $\sigma$ are locally Lipschitz and have at-most linear growth and some conditions on the Lipschitz constants on the truncated versions of $b$ and $\sigma$.  Our results generalize the work of Foondun et al.(2025) to a space-time fractional setting.
		
	\end{abstract}
	
	\keywords{Caputo derivative; Fractional Laplacian; locally Lipschitz; Truncation;\\ Random field; Space-time fractional SPDEs .}
	
	\newpage
	\section{Introduction}
	\indent 
	
	In this paper, we study the well-posedness of the following space-time stochastic fractional diffusion 
	
	\begin{equation}\label{MainEq}
		\begin{split}
			\begin{cases}
				\partial_t^\beta u(t,x)=&-(-\Delta)^{\alpha/2}u(t,x)+I_t^{1-\beta}\big[b(u(t, x))+\sigma(u(t,x))\dot{W}(t,x)\big],  \ t>0,\  x\in \mathbb{R}^d, \\
				\ \  u(0,\cdot)=&u_0(\cdot),
			\end{cases} 
		\end{split}
	\end{equation}
	with $\alpha\in(0,2]$, $b$ and $\sigma$ is a locally Lipschitz function with at-most linear growth.  $\dot{W}$ represents the space-time white noise.
	$-(-\Delta)^{\alpha/2}$ is the fractional Laplacian. The initial condition $u_0$ is a function satisfying some conditions (to be specified later). $\partial_t^\beta$ is the Caputo fractional differential operator for $\beta\in(0,1)$, defined by:
	\begin{equation*}
		\partial_t^\beta f(t)=\frac{1}{\Gamma(1-\beta)}\int_0^t \frac{f'(s)}{(t-s)^\beta} ds,
	\end{equation*}
	and $I_t^\gamma$ is the Riemann-Liouville fractional integral of order $\gamma>0$, defined by
	$$I_t^\gamma f(t)=\frac{1}{\Gamma(\gamma)}\int_0^t(t-s)^{\gamma-1}f(s)ds, \ \text{for}\ t>0,$$  
	with the convention that $I_t^0=\text{Id}$, the identity operator. Here $\Gamma(\cdot)$ is the Euler gamma function. The Riemann-Liouville fractional integral operator is used in Eq. \eqref{MainEq} to properly handle the "derivative" of the random term, resulting in the space-time white noise, see for example \cite[pp. 3303-3304]{MIJNane-2015} for the details.
	Note that when $\beta=1$ and $\alpha=2,$ the fractional derivatives $\partial_t^\beta$ and $-(-\Delta)^{\alpha/2}$ become the first order derivative $\frac{\partial}{\partial t}$ and the Laplacian, $\Delta=\frac{\partial^2}{\partial x_1^2}+\cdots+\frac{\partial^2}{\partial x_d^2} $, respectively. In this case, problem \eqref{MainEq} becomes the classical diffusion problem studied in \cite{FooSanLart} for $d=1$.

	The space-time fractional stochastic partial differential equation \eqref{MainEq} (with some variations) has been studied a lot recently, see for example \cite{GueNan, ChenHuNua,MIJNane-2015, MijNane} for the derivation of this equation and motivation to study it.
	When $\sigma$ is Lipschitz continuous in the spatial variable and uniformly in the time variable, 
	it was shown in \cite{MIJNane-2015} that \eqref{MainEq} is well posed (in the case $b\equiv0)$ provided that $d<\min\big(2,\beta^{-1}\big)\alpha$. The same argument can also be used to show that this remains true if $b$ is also Lipschitz continuous. In this work, we aim to extend the well-posedness result to the case when the Lipschitz condition is weakened to locally Lipschitz with at-most linear growth; more precisely, we show that when $b$ and $\sigma$ are locally Lipschitz with at-most linear growth in their spatial variable, Eq. \eqref{MainEq} has a unique solution with finite moments provided that $d<\min\big(2,\beta^{-1}\big)\alpha$. We use the roadmap of Foondun et al \cite{FooSanLart} to prove our results  with crucial changes mainly  in the use of the bounds of the heat kernel estimates. Our results extend the work of \cite{FooSanLart} to time and space fractional setting. The main idea here is the use of truncation, mainly  point-wise tail probability estimates for the truncated solution.

	\section{Preliminary results}
	
	In this section, we describe the truncation argument mentioned in the Introduction and provide some intermediate results needed for the proof of our main theorem which we provide in the next section.

	We define a solution of \eqref{MainEq} as a predictable random field $u=\{u(t,x)\}_{t>0,x\in\mathbb{R}}$ satisfying the following integral equation
	
	\begin{equation}\label{MildSol}
		u(t,x)=(G_t\star u_0)(x)+\mathcal{I}_b(t,x)+\mathcal{I}_\sigma(t,x),
	\end{equation}
	where
	\begin{equation*}
		(G_t\star u_0)(x)= \int\limits_{\mathbb{R}^d} G_t(y-x)u_0(y)dy,
	\end{equation*}
	\begin{equation}\label{Ib}
		\mathcal{I}_b(t,x)=\int\limits_0^t \int\limits_{\mathbb{R}^d} \ G_{t-s}(y-x)b\big(s, u(s,y)\big)dyds,
	\end{equation}
	\begin{equation}\label{Is}
		\mathcal{I}_\sigma(t,x)=\int\limits_0^t \int\limits_{\mathbb{R}^d} \ G_{t-s}(y-x)\sigma\big(u(s,y)\big)W(dy,ds), 
	\end{equation}
	and $G_{\cdot}(\cdot)$ is the "heat kernel" satisfying
	\begin{equation}\label{G:Bounds-}
		c_1\Bigg(t^{-\beta d/\alpha}\wedge \frac{t^\beta }{|x|^{d+\alpha}}\Bigg)\leq G_t(x)\leq c_2\Bigg(t^{-\beta d /\alpha}\wedge \frac{t^\beta }{|x|^{d+\alpha}}\Bigg),
	\end{equation}
	for some positive constants $c_1$ and $c_2$ and the integral defined \eqref{Is} is understood in the sense of Walsh \cite{Walsh}.
		Using basic properties of the heat kernel $G$, see \cite[Lemma 1]{MIJNane-2015}, we can show that
	\begin{equation}\label{HeatKernel}
		{\|G_r\|}^2_{L^2(\mathbb{R}^d)}=\int\limits_{\mathbb{R}^d}G_r^2( y)dy= C^{\star}r^{-\beta/\alpha} \qquad \text{for every} \ r>0, 
	\end{equation}
	\begin{equation}\label{Const-G}
		C^{\star}= \frac{2\pi^{d/2}}{\alpha\Gamma(d/2)}\frac{1}{(2\pi)^{d}}\int\limits_0^\infty z^{1/\alpha-1}E^2_\beta(-z)dz;
	\end{equation}
	and $E_\beta(z):=\sum\limits_{\ell=1}^\infty\frac{z^\ell}{\Gamma(1+\beta \ell)}$ is the one-parameter Mittag-Leffler function.
	
	\begin{remark}
		For simplicity, we will assume $d=1$ in our calculations for the remainder of this paper, but we would like to point out that our arguments remain valid in higher spatial dimensions $d<\min\big(2,\beta^{-1}\big)\alpha$.
	\end{remark}
	
	For $X\in L^p(\Omega)$, we define the $L^p(\Omega)-$norm as ${\|X\|}_p=\big[\mathbb{E}|X|^p\big]^{1/p}$ for all $p\geq 1$. In addition, for every space-time function $f:\mathbb{R}_+\times\mathbb{R}\rightarrow\mathbb{R}$, we will use the notation $\text{Lip}(f)$ for the optimal Lipschitz constant of $f$, i.e,
	\begin{equation}\label{OptLips}
		\text{Lip}(f)= \sup\limits_{t>0}\sup_{\substack{a,b\in\mathbb{R} \\ a\neq b}} \frac{|f(t,b)-f(t,a)|}{|b-a|}.
	\end{equation}
	It is not hard to see that $f$ is globally Lipschitz continuous in $x$, uniformly in $t$, if and only if $\text{Lip}(f)<\infty.$ Note that \eqref{OptLips} remains valid even if $f$ is a function of the single spatial variable $x,$ provided that we set $f(t,x)=f(x)$. 
	
	The following assumptions on the in initial condition and coefficients $b$ and $\sigma$ will be used in our arguments.
	\begin{assumption}\label{As1}
		$u_0:\mathbb{R}\rightarrow\mathbb{R}$ is non-random, bounded and measurable.   
	\end{assumption}
	
	\begin{assumption}\label{As2}
		The functions $b:\mathbb{R}_+\times\mathbb{R}\rightarrow\mathbb{R}$ and $\sigma:\mathbb{R}_+\times\mathbb{R}\rightarrow\mathbb{R}$ are locally Lipschitz continuous in their space variable with at-most linear growth, uniformly in their time variable, i.e, $0<\text{Lip}_n(b), \text{Lip}_n(\sigma)<\infty$ and $0<L_b, L_\sigma<\infty,$ for all real numbers $n>0$ where, for each space-time function $\psi,$
		\begin{equation}\label{Eq-As2}         L_\psi=\sup\limits_{t>0}\sup\limits_{x\in\mathbb{R}}\frac{|\psi(t,x)|}{1+|x|}, \text{and} \ \text{Lip}_n(\psi)=\sup\limits_{t>0}\sup_{\substack{x,y\in[-n,n] \\ x\neq y}} \frac{|\psi(t,x)-\psi(t,y)|}{|x-y|}.
		\end{equation}
	\end{assumption}
	
	Next, for every number $N>0$, we define the truncated coefficients $b_N:\mathbb{R}_+\times\mathbb{R}\rightarrow\mathbb{R}$ and $\sigma_N:\mathbb{R}_+\times\mathbb{R}\rightarrow\mathbb{R}$ via  
	$$
	\psi_N(t,x)=\psi(t,x)\mathbf{1}_{\{|x|<e^N\}}+\psi(t,-e^N)\mathbf{1}_{\{x<-e^N\}}+\psi(t,e^N)\mathbf{1}_{\{x>e^N\}}, \quad \text{for all}\  t>0.
	$$

	We then define the Lipschitz constants of the truncated coefficients as 
	
	\begin{equation}\label{LN}
		L_{N,b}=\text{Lip}_{\exp(N)}(b) \ \text{and} \  L_{N,\sigma}=\text{Lip}_{\exp(N)}(\sigma).
	\end{equation}
	
	It can be easily shown that $b_N$ and $\sigma_N$ are globally Lipschitz for any fixed $N>0$. In fact
	
	\begin{equation}\label{Trc-As2}
		\sup\limits_{t>0}\sup\limits_{x\in\mathbb{R}}\frac{|b_N(t,x)|}{1+|x|}\leq L_b<\infty, \text{and} \ \sup\limits_{t>0}\sup\limits_{x\in\mathbb{R}}\frac{|\sigma_N(t,x)|}{1+|x|}\leq L_\sigma<\infty,
	\end{equation}
	uniformly in $N>0.$ 
	
	We also add an assumption on the Lipschitz constants of the truncated coefficients $b_N$ and $\sigma_N$. This is needed for the calculations of the tail estimates of the truncated solution. 
	\begin{assumption}\label{As3}
		If $L_\sigma>0,$ then we assume that 
		\begin{equation}\label{Ass3-1}
			\text{L}_{N,\sigma}=\mathcal{o}\Big(N^{\frac{(1-\beta/\alpha)(2-\beta/\alpha)}{2}}\Big) \qquad \text{L}_{N,b}/\text{L}_{N,\sigma}^{\frac{2}{1-\beta/\alpha}}=\mathcal{O}(1) \qquad \text{as} \ N\rightarrow\infty.
		\end{equation}
		If $\sigma$ is bounded, then we assume that 
		\begin{equation}\label{Ass3-2}
			\text{L}_{N,\sigma}=\mathcal{o}\Big(e^{N(1-\beta/\alpha)}\Big) \qquad \text{L}_{N,b}/\text{L}_{N,\sigma}^{\frac{2}{1-\beta/\alpha}}=\mathcal{O}(1) \qquad \text{as} \ N\rightarrow\infty.
		\end{equation}
	\end{assumption}
	
	It has long been conjectured that Theorem \ref{MainTh} (in the next paragraph) should hold under the minimal assumptions of locally Lipschitz coefficients $b$ and $\sigma$ with linear growth. We examine this in the next result, which is also our main result in this paper.

	\begin{theorem}\label{MainTh}
		Under Assumptions \ref{As1}, \ref{As2} and \ref{As3}, Eq. \eqref{MainEq} has a unique random field solution satisfying
		\begin{equation*}
			\sup\limits_{t\in(0,T]}\sup\limits_{x\in\mathbb{R}}\mathbb{E}\big(|u(t,x)|^k\big)<\infty \ \text{for all} \ T>0 \ \text{and} \ k\geq 1.
		\end{equation*}
	\end{theorem}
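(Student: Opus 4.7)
My strategy follows the truncation/patching roadmap suggested by the remark after Assumption \ref{As3}, adapted to the time-fractional setting where the effective $L^2$ heat-kernel rate is $r^{-\beta/\alpha}$ rather than $r^{-1/\alpha}$. Concretely, for each $N>0$ I will consider the auxiliary SPDE obtained by replacing $b,\sigma$ in \eqref{MainEq} with their truncations $b_N,\sigma_N$. Because $b_N$ and $\sigma_N$ are globally Lipschitz with constants $L_{N,b},L_{N,\sigma}$ and retain the uniform linear growth bound \eqref{Trc-As2}, the well-posedness theory of \cite{MIJNane-2015} (after a routine extension to accommodate the Lipschitz drift $b_N$) produces a unique predictable mild solution $u_N$ satisfying the mild formulation \eqref{MildSol} with $b,\sigma$ replaced by $b_N,\sigma_N$, and with $\sup_{t\leq T}\sup_x\mathbb{E}|u_N(t,x)|^k<\infty$ for every $k\geq 1$. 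The plan is then to send $N\to\infty$ and patch these solutions together.

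The first substantive step is to establish moment bounds for $u_N$ that are \emph{uniform in $N$}. Writing the mild equation, bounding $|b_N(u)|\le L_b(1+|u|)$ and $|\sigma_N(u)|\le L_\sigma(1+|u|)$ using \eqref{Trc-As2}, applying Burkholder--Davis--Gundy to the stochastic integral \eqref{Is}, and using the identity \eqref{HeatKernel} $\|G_r\|_{L^2}^2 = C^\star r^{-\beta/\alpha}$ produces
$$\|u_N(t,x)\|_k^2 \;\le\; C\Big(1+\int_0^t(t-s)^{-\beta/\alpha}\sup_{y}\|u_N(s,y)\|_k^2\,ds\Big),$$
to which a fractional Gronwall inequality (valid since $\beta/\alpha<1$) applies and gives a bound on $\sup_{t\le T}\sup_x\|u_N(t,x)\|_k$ depending only on $L_b,L_\sigma,T,k$, not on $N$.

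The heart of the argument is the pointwise tail estimate: I want to show that, at each fixed $(t,x)$, $\sum_N\mathbb{P}\{|u_N(t,x)|>e^N\}<\infty$. The naive Markov bound $e^{-kN}\|u_N(t,x)\|_k^k$ is already summable thanks to the previous step, but this cruder estimate would not suffice to patch the solutions across truncation levels. The correct quantity to control is the discrepancy $\|u_{N+1}-u_N\|_k$ on the event where both stay below $e^N$, where the relevant Lipschitz constants $L_{N,b},L_{N,\sigma}$ appear. Iterating a Picard-type difference inequality (again using $\|G_r\|_{L^2}^2=C^\star r^{-\beta/\alpha}$ and BDG) yields a bound for $\mathbb{P}\{|u_N(t,x)|>e^N\}$ whose leading order involves $L_{N,\sigma}^{2/(1-\beta/\alpha)}$ and $L_{N,b}$; Assumption \ref{As3} is exactly calibrated so this is summable. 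A Borel--Cantelli argument then shows that for almost every $\omega$, $u_N(t,x,\omega)$ is eventually constant in $N$; call this limit $u(t,x)$. Because the truncations agree with $b,\sigma$ on $[-e^N,e^N]$, the limit $u$ satisfies \eqref{MildSol}, and the uniform $L^k$ bounds from the previous paragraph pass to $u$ by Fatou.

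Uniqueness is obtained by a parallel truncation argument: if $u$ and $\tilde u$ are two solutions with the claimed moment property, then for each $N$ both coincide with the unique solution of the truncated problem up to a stopping set whose complement has vanishing probability by the same tail estimate, so $u=\tilde u$ almost surely. The main obstacle I anticipate is the tail estimate in the third paragraph above: one must run the Picard iteration carefully enough that the Lipschitz constants $L_{N,b},L_{N,\sigma}$ and the threshold $e^N$ combine into the precise exponents appearing in Assumption \ref{As3}. The two separate regimes in \eqref{Ass3-1} and \eqref{Ass3-2} reflect whether $\sigma$ grows linearly or is bounded, and the bookkeeping of which bound on $|\sigma_N|$ to invoke (linear growth vs. the $L^\infty$ bound $L_\sigma(1+e^N)$ coming from truncation) is the delicate part that must match the space-time fractional scaling $r^{-\beta/\alpha}$ throughout.
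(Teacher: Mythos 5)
Your overall skeleton (truncate $b,\sigma$ at level $e^N$, get moment bounds for $u_N$ uniform in $N$, derive pointwise tail estimates calibrated against Assumption \ref{As3}, then pass to the limit) matches the paper's plan, but the step by which you patch the truncated solutions together contains a genuine gap. You claim that summability of $\mathbb{P}\{|u_N(t,x)|>e^N\}$ at a fixed $(t,x)$ plus Borel--Cantelli implies that $u_N(t,x,\omega)$ is \emph{eventually constant in $N$}. This does not follow: $b_N$ and $b_{N+1}$ (resp.\ $\sigma_N,\sigma_{N+1}$) differ only where the argument exceeds $e^N$, but $u_{N+1}(t,x)$ is a functional of the whole random field $u_{N+1}(s,y)$ for $s\le t$ and \emph{all} $y\in\mathbb{R}$ through the mild formulation \eqref{Trc-MildSol}; knowing that the single value $|u_{N+1}(t,x)|$ stays below $e^N$ gives no identification of $u_N(t,x)$ with $u_{N+1}(t,x)$. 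For equations driven by space-time white noise on $\mathbb{R}$ the field is a.s.\ unbounded in the spatial variable at each positive time, so there is no usable exit-time localization as in the finite-dimensional SDE setting; this is exactly the obstruction that forces the paper's more delicate argument. The same defect affects your uniqueness sketch ("both coincide with the unique solution of the truncated problem up to a stopping set"), which presupposes the identification that fails.

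What the paper actually does at this point is prove
\begin{equation*}
\sum_{N=1}^{\infty}\ \sup_{t\in(0,T]}\ \sup_{x\in\mathbb{R}}\ \big\lVert u_{N+1}(t,x)-u_N(t,x)\big\rVert_{k}<\infty ,
\end{equation*}
by splitting the difference of the coefficient terms over the event $\mathcal{G}_{N+1}(s,y)=\{|u_{N+1}(s,y)|\le e^N\}$ and its complement: on the good event the truncated Lipschitz constants $L_{N,b},L_{N,\sigma}$ apply, on the bad event the linear-growth moment bounds of Proposition \ref{Prob4} and the tail estimates of Proposition \ref{Prob6} apply, and a weighted-norm ($\mathcal{N}_{k,T,\gamma}$) contraction with $\gamma\asymp L_{N,\sigma}^{2/(1-\beta/\alpha)}$ absorbs the Lipschitz part while Assumption \ref{As3} guarantees the remainder still decays in $N$ (this is also where the precise $\exp\{Ck^{1+1/(1-\beta/\alpha)}t\}$ growth in $k$ of the moments is needed, which your unquantified fractional-Gronwall bound would not deliver, since the tail estimate is obtained by optimizing Markov's inequality over $k\sim N^{1-\beta/\alpha}$). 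The resulting $L^k$-Cauchy property gives a limit field $u$, and a separate step verifies that $\mathcal{I}_{b_N}^N\to\mathcal{I}_b$ and $\mathcal{I}_{\sigma_N}^N\to\mathcal{I}_\sigma$ in $L^2(\Omega)$ so that $u$ solves \eqref{MildSol}; uniqueness is then proved by estimating $\|u-v\|_k$ directly for two solutions, splitting over $A_N(s,y)=\{|u(s,y)|\le e^N,\ |v(s,y)|\le e^N\}$ and letting $N\to\infty$, not by matching each solution with $u_N$. To repair your proposal you would need to replace the Borel--Cantelli/eventual-constancy mechanism by this summable-difference (or an equivalent $L^k$-convergence) argument.
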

	This Theorem will be proved in Section \ref{Proof-Thm}.
	
	We now consider the truncated version of \eqref{MainEq}:
	\begin{equation}\label{Trc-spde}
		\partial_t^\beta u_N(t,x)=-(-\Delta)^{\alpha/2}u_N(t,x)+I_t^{1-\beta}\big[b_N\big(t, u_N(t,x)\big)+\sigma_N\big(t,u_N(t,x)\big)\dot{W}(t,x)\big],  \ t>0,\  x\in \mathbb{R}
	\end{equation}
	subject to the initial condition   $u_N(0,x)=u_0(x)$. Following standard theory by \cite{Dalang, Walsh, MIJNane-2015}, Eq. \eqref{Trc-spde} has a unique predictable mild solution satisfying 
	\begin{equation}\label{Trc-Cond}
		\sup\limits_{t\in(0,T]}\sup\limits_{x\in\mathbb{R}}\mathbb{E}\big(|u_N(t,x)|^k\big)<\infty \ \text{for all} \ N, T>0 \ \text{and} \ k\geq 1.
	\end{equation}
	Note that, Eq. \eqref{Trc-spde} is a short-hand for the random integral equation
	\begin{equation}\label{Trc-MildSol}
		u_N(t,x)=(G_t\star u_0)(x)+\mathcal{I}_{b_N}^N(t,x)+\mathcal{I}_{\sigma_N}^N(t,x),
	\end{equation}
	where
	
	\begin{equation}\label{IbN}
		\mathcal{I}_{b_N}^N(t,x)=\int\limits_0^t \int\limits_{\mathbb{R}} \ G_{t-s}(y-x)b_N\big(s, u_N(s,y)\big)dyds,
	\end{equation}
	\begin{equation}\label{IsN}
		\mathcal{I}_{\sigma_N}^N(t,x)=\int\limits_0^t \int\limits_{\mathbb{R}} \ G_{t-s}(y-x)\sigma_N\big(s,u_N(s,y)\big)W(dy,ds).
	\end{equation}
	
	In the next two Propositions, we use the methodology of \cite{FooSanLart} and adapt it to our case with crucial changes in the estimates of the heat kernel to find moment and tail estimates for the truncated solution. We describe the details in order to provide the explicit constants and parameter dependencies of the bounds.
	\begin{proposition}\label{Prob4}
		\begin{enumerate}
			\item In one hand, if $L_\sigma>0,$ then $$ \sup\limits_{N>0}\sup\limits_{x\in\mathbb{R}}\mathbb{E}\big(|u_N(t,x)|^k\big)\leq C_0^k\exp{4(C_{\#}L_\sigma)^{\frac{2}{1-\beta/\alpha}}k^{1+\frac{1}{1-\beta/\alpha}} t}
			$$
			uniformly for all $t>0$ and $k\geq \max\Big(2, L_b^{1-\beta/\alpha}L_\sigma^{-2}\Big)$. Here, $C_0:=4({\|u_0\|}_{L^\infty(\mathbb{R})}+1) $ and $C_{\#}:=4\sqrt{C^{\star}\Gamma(1-\beta/\alpha)}$, where $C^{\star}$ is defined in \eqref{Const-G}.
			\item On the other hand, if $\sigma\in L^\infty (\mathbb{R}_+\times\mathbb{R})$, then
			$$
			\sup\limits_{N>0}\sup\limits_{x\in\mathbb{R}}\mathbb{E}\big(|u_N(t,x)|^k\big)\leq C_{\star}^k e^{2kL_b t}\Big({\|u_0\|}_{L^\infty(\mathbb{R})}+{\|\sigma\|}_{L^\infty(\mathbb{R_{+}}\times\mathbb{R})}+1\Big)^kk^{k/2},
			$$
			
			uniformly for all $t>0$ and $k\geq 2$. Here, $C_{\star}:=\max\Bigg(2,\frac{4C_{\alpha, \beta, \gamma}\sqrt{C^{\star}}}{\sqrt{1-\beta/\alpha}}\Bigg)$ and $C^{\star}$ is again defined in \eqref{Const-G} and $C_{\alpha, \beta, \gamma}$ is a positive constant depending on $\alpha, \beta$ and $\gamma$.
		\end{enumerate} 
	\end{proposition}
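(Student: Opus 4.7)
The plan is to convert the mild formulation \eqref{Trc-MildSol} into a weakly singular linear Volterra inequality for the quantity $f_N(t) := 1 + \sup_{x \in \mathbb{R}}\|u_N(t,x)\|_k$ (finite by \eqref{Trc-Cond}) and then close it with a fractional Gr\"onwall bound whose Mittag-Leffler asymptotics generate the claimed exponential-in-$k$ growth. The initial-condition term $(G_t\star u_0)(x)$ is immediately controlled by $\|u_0\|_{L^\infty(\mathbb{R})}$ since $G_t$ is a probability density. For the drift, Minkowski's inequality combined with the uniform linear growth \eqref{Trc-As2} gives
\[
\|\mathcal{I}_{b_N}^N(t,x)\|_k \le L_b \int_0^t f_N(s)\,ds .
\]
For the stochastic term, the Burkholder-Davis-Gundy inequality for Walsh integrals (with $L^k$-constant of order $\sqrt{k}$), followed by Minkowski in $L^{k/2}(\Omega)$, the growth of $\sigma_N$ from \eqref{Trc-As2}, and the heat-kernel identity \eqref{HeatKernel}, yields
\[
\|\mathcal{I}_{\sigma_N}^N(t,x)\|_k^2 \le 4k L_\sigma^2 C^\star \int_0^t (t-s)^{-\beta/\alpha} f_N(s)^2 \,ds .
\]

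Squaring the triangle inequality for $\|u_N(t,x)\|_k$, absorbing the drift contribution via Cauchy-Schwarz, and taking the supremum in $x$ produces, in case (1), the linear weakly-singular Volterra inequality
\[
f_N(t)^2 \le A_0 + B_0 \int_0^t f_N(s)^2\,ds + D_0\,k \int_0^t (t-s)^{-\beta/\alpha} f_N(s)^2\,ds ,
\]
with $A_0 \asymp (\|u_0\|_{L^\infty(\mathbb{R})}+1)^2$, $B_0 \asymp L_b^2$, and $D_0$ a constant multiple of $L_\sigma^2 C^\star$. Iterating this inequality (equivalently, invoking the Henry-Gr\"onwall lemma for kernels of type $(t-s)^{-\gamma}$) expresses $f_N(t)^2$ as a Mittag-Leffler series evaluated at $D_0 k\,\Gamma(1-\beta/\alpha)\,t^{1-\beta/\alpha}$. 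The classical asymptotic $E_\rho(z) \sim \rho^{-1} e^{z^{1/\rho}}$ then delivers
\[
f_N(t)^2 \lesssim A_0 \exp\!\bigl\{c\,(C^\star \Gamma(1-\beta/\alpha)L_\sigma^2)^{1/(1-\beta/\alpha)} k^{1/(1-\beta/\alpha)} t\bigr\}.
\]
Raising to the $(k/2)$-th power and carefully bookkeeping the numerical prefactors yields exactly the claimed $C_0^k \exp\{4(C_\# L_\sigma)^{2/(1-\beta/\alpha)} k^{1+1/(1-\beta/\alpha)} t\}$. The hypothesis $k \ge \max(2, L_b^{1-\beta/\alpha}L_\sigma^{-2})$ is precisely the regime in which the drift-driven $B_0$ term is dominated by the singular stochastic term and can be absorbed into the exponent.

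Case (2) is structurally simpler. With $\sigma$ bounded the stochastic integral decouples from $u_N$, and the same BDG step combined with \eqref{HeatKernel} gives
\[
\|\mathcal{I}_{\sigma_N}^N(t,x)\|_k \le C_{\alpha,\beta,\gamma}\sqrt{k}\,\|\sigma\|_{L^\infty(\mathbb{R}_+\times\mathbb{R})}\sqrt{C^\star/(1-\beta/\alpha)}\,t^{(1-\beta/\alpha)/2} ,
\]
independently of $u_N$. The resulting estimate for $M_N(t) := \sup_x \|u_N(t,x)\|_k$ reduces to a standard (non-singular) linear Gr\"onwall inequality driven only by $L_b$, and its $k$-th power matches the announced $C_\star^k e^{2kL_b t}(\|u_0\|_{L^\infty(\mathbb{R})} + \|\sigma\|_{L^\infty(\mathbb{R}_+\times\mathbb{R})} + 1)^k k^{k/2}$.

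The main technical obstacle is keeping explicit numerical control through the Mittag-Leffler inversion in case (1), in particular verifying that $C_\# = 4\sqrt{C^\star\Gamma(1-\beta/\alpha)}$ emerges with exactly the stated prefactor $4$ in the exponent after accounting for the successive Cauchy-Schwarz and Minkowski losses, and that the threshold $k \ge L_b^{1-\beta/\alpha}L_\sigma^{-2}$ is precisely what is needed to subsume the drift into the stochastic exponent. Uniformity in $N$ is automatic because the linear-growth constants $L_b$ and $L_\sigma$ in \eqref{Trc-As2} are independent of $N$, and the Lipschitz constants $L_{N,b}, L_{N,\sigma}$ do not enter the computation at all.
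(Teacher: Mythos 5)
Your route (recast the mild form as a weakly singular Volterra inequality for $f_N(t)^2$ and close it with a Henry--Gr\"onwall/Mittag--Leffler bound) is genuinely different from the paper's argument, but as written it has two real gaps. First, the drift term: after squaring the triangle inequality you control $\big(\int_0^t f_N(s)\,ds\big)^2$ by Cauchy--Schwarz, which produces a kernel coefficient of size $L_b^2\,t$, not a constant; the resulting Gr\"onwall exponent then contains a contribution of order $L_b^2 t^2$, which is incompatible with the claimed bound, whose exponent is \emph{linear} in $t$ uniformly for all $t>0$. Your stated remedy --- that $k\geq L_b^{1-\beta/\alpha}L_\sigma^{-2}$ lets the drift be ``absorbed into the singular stochastic term'' --- does not work pointwise either, since $L_b\leq D_0 k\,(t-s)^{-\beta/\alpha}$ fails for large $t-s$; some additional device (a weighted kernel comparison, or a restriction to bounded horizons with a separate argument) is needed and is not supplied. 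Second, the constants: the asymptotic $E_\rho(z)\sim \rho^{-1}e^{z^{1/\rho}}$ holds only as $z\to\infty$, so to get a bound valid for all $t>0$ you need a global estimate $E_\rho(z)\leq C_\rho e^{z^{1/\rho}}$ with an explicit $C_\rho$, and even then it is only asserted, not shown, that the ``bookkeeping'' reproduces $C_0=4(\|u_0\|_{L^\infty(\mathbb{R})}+1)$, the prefactor $4$ in the exponent, and $C_{\#}=4\sqrt{C^\star\Gamma(1-\beta/\alpha)}$. These specific constants are not cosmetic: they feed into Proposition \ref{Prob6}, the choice of $c$ in \eqref{c-sup}, and Assumption \ref{As3}, so a proof that delivers ``the same form with some constants'' does not yet prove the stated proposition.

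For comparison, the paper avoids both issues by working with the exponentially weighted norm $\mathcal{N}_{k,\gamma}$ of \eqref{N-norm}: Minkowski plus \eqref{Trc-As2} give $I_1\leq L_b\gamma^{-1}e^{\gamma t}[1+\mathcal{N}_{k,\gamma}(u_N)]$, while BDG, \eqref{HeatKernel}, and the Laplace-transform bound $\int_0^t (t-s)^{-\beta/\alpha}e^{2\gamma s}\,ds\leq \Gamma(1-\beta/\alpha)(2\gamma)^{-(1-\beta/\alpha)}e^{2\gamma t}$ give the stochastic term; choosing $\gamma=4\big(4\sqrt{C^\star\Gamma(1-\beta/\alpha)k}\,L_\sigma\big)^{2/(1-\beta/\alpha)}$ and using $k\geq \max(2,L_b^{1-\beta/\alpha}L_\sigma^{-2})$ makes the combined coefficient at most $3/4$, so one solves directly for $\mathcal{N}_{k,\gamma}(u_N)\leq 4(\|u_0\|_{L^\infty(\mathbb{R})}+1)$ and reads off exactly the stated constants --- no Mittag--Leffler inversion, and the drift and noise are handled simultaneously with an exponent linear in $t$. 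Your treatment of case (2) is essentially sound and close to the paper's (the paper runs the same weighted-norm argument with $\gamma=2L_b$ instead of a plain Gr\"onwall), but case (1) needs to be reworked along these lines or with a genuinely quantitative singular-Gr\"onwall lemma before it proves the proposition as stated.
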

	\begin{proof}
		\begin{enumerate}
			\item \underline{$L_\sigma>0$}:
		\end{enumerate}
		Choose and fix, $N, t>0$ and $x\in\mathbb{R.}$ Using \eqref{Trc-MildSol}, it follows that
		\begin{equation}\label{Trc-norm}
			{\|u_N(t,x)\|}_k\leq {\|u_0\|}_{L^{\infty}(\mathbb{R})}+I_1+I_2, 
		\end{equation}
		where $I_1={\|\mathcal{I}_b^N(t,x)\|}_k$ and $I_2={\|\mathcal{I}_\sigma^N(t,x)\|}_k$. We then proceed to estimate $I_1$ first. To this aim, we apply \eqref{Eq-As2} and Minkowski's inequality for integrals together with the fact that 
		
		$${\|b_N\big(s, u_N(s,y)\big)\|}_k\leq L_b\big(1+{\|u_N\big(s,y)\big\|}_k\big) \qquad (\text{which follows from \eqref{Trc-As2}})$$ to get
		\begin{align*}
			I_1\leq &\int\limits_0^tds \int\limits_{\mathbb{R}} dy \ G_{t-s}(y-x){\|b_N\big(s,u_N(s,y)\big)\|}_k\\
			\leq & L_b\Big[t+\int\limits_0^t \sup\limits_{y\in\mathbb{R}}{\|u_N(s,y)\|}_kds\Big].
		\end{align*}
		Define
		\begin{equation}\label{N-norm}
			\mathcal{N}_{k, \gamma}(Z)=\sup\limits_{t> 0}\sup\limits_{x\in\mathbb{R}}e^{-\gamma t} {\|Z(t,x)\|}_k,
		\end{equation}
		for all space-time random field $Z={\{Z(t,x)\}}_{\substack{t>0 \\ x\in\mathbb{R}}}$ and  real numbers $k\geq 1$ and $\gamma>0.$
		
		Using this notation, we can bound $I_1$ as follows:
		$$I_1\leq L_b\Big[t+\mathcal{N}_{k, \gamma}(u_N)\int\limits_0^t e^{\gamma s}ds\Big]\leq  L_b\Big[t+\frac{e^{\gamma t}}{\gamma}\mathcal{N}_{k, \gamma}(u_N)\Big].$$
		Since $te^{-\gamma t}\leq (e\gamma)^{-1}\leq \gamma^{-1},$ it follows that
		\begin{equation}\label{I1-bound}
			I_1\leq L_b\frac{e^{\gamma t}}{\gamma}\Big[1+\mathcal{N}_{k, \gamma}(u_N)\Big].
		\end{equation}
		Next, using the asymptotically optimal form of the Burkh\"older-Davis-Gundy inequality (see for example \cite{DaViMuLarXiao}), we find an upper bound for $I_2$:
		\begin{equation}\label{I2-sq}
			I_2^2\leq 4k \int\limits_0^t ds\int\limits_{\mathbb{R}} dy \  \big[G_{t-s}(y-x)\big]^2{\|\sigma_N\big(s,u_N(s,y)\big)\|}_k^2.
		\end{equation}
		Now, using the fact that ${\big\|\sigma_N\big(s, u_N(s,y)\big)\big\|}_k^2\leq 2L_\sigma^2\Big(1+{\|u_N\big(s,y)\big\|}_k^2\Big)$ (which also follows from \eqref{Trc-As2}), we get
		$$
		I_2^2\leq 8k L_\sigma^2\int\limits_0^t ds\int\limits_{\mathbb{R}} dy \  \big[G_{t-s}(y-x)\big]^2\big(1+{\|u_N(s,y)\|}_k^2\big).
		$$

		Thus, using \eqref{HeatKernel}, we have
		\begin{align*}
			I_2^2\leq& 8k L_\sigma^2C^{\star}\int\limits_0^t s^{-\beta/\alpha}ds+8k L_\sigma^2C^{\star}\int\limits_0^t   \sup\limits_{y\in\mathbb{R}} {\|u_N(s,y)\|}_k^2s^{-\beta/\alpha}ds\\
			\leq & \frac{8C^{\star}\Gamma(1-\beta/\alpha)L_\sigma^2 k e^{2\gamma t}}{(2\gamma)^{1-\beta/\alpha}}\Big(1+\big[\mathcal{N}_{k,\gamma}(u_N)\big]^2\Big).
		\end{align*}
		We simplify this expression further by using the fact that $\sqrt{\ell^2+n^2}\leq |\ell|+|n|$ which is valid for all $\ell,n\in\mathbb{R}$. 
		
		It follows that
		\begin{equation}\label{I2-bound}
			I_2
			\leq  \frac{4\sqrt{C^{\star}\Gamma(1-\beta/\alpha)k}L_\sigma e^{\gamma t}}{(2\gamma)^{{\frac{1-\beta/\alpha}{2}}}}\Big[1+\mathcal{N}_{k,\gamma}(u_N)\Big].
		\end{equation}
		We then combine \eqref{I1-bound} and \eqref{I2-bound} to see that
		\begin{align*}
			{\|u_N(t,x)\|}_k\leq& {\|u_0\|}_{L^{\infty}(\mathbb{R})}+L_b\frac{e^{\gamma t}}{\gamma}\Big[1+\mathcal{N}_{k, \gamma}(u_N)\Big]+\frac{4\sqrt{C^{\star}\Gamma(1-\beta/\alpha)k}L_\sigma e^{\gamma t}}{(2\gamma)^{{\frac{1-\beta/\alpha}{2}}}}\Big[1+\mathcal{N}_{k,\gamma}(u_N)\Big]\\
			\leq& {\|u_0\|}_{L^{\infty}(\mathbb{R})}+e^{\gamma t}\Bigg[\frac{L_b}{\gamma}+\frac{4\sqrt{C^{\star}\Gamma(1-\beta/\alpha)k}L_\sigma }{(2\gamma)^{{\frac{1-\beta/\alpha}{2}}}}\Bigg]\Big(1+\mathcal{N}_{k,\gamma}(u_N)\Big).
		\end{align*}
		This estimate in turn implies that
		$$
		\mathcal{N}_{k,\gamma}(u_N)\leq {\|u_0\|}_{L^{\infty}(\mathbb{R})}+\Bigg[\frac{L_b}{\gamma}+\frac{4\sqrt{C^{\star}\Gamma(1-\beta/\alpha)k}L_\sigma }{(2\gamma)^{{\frac{1-\beta/\alpha}{2}}}}\Bigg]\Big(1+\mathcal{N}_{k,\gamma}(u_N)\Big).
		$$
		Now, set $\gamma=4\big(4\sqrt{C^{\star}\Gamma(1-\beta/\alpha)k}L_\sigma \big)^{\frac{2}{1-\beta/\alpha}}$ and choose $k\geq \max\big(2,L_b^{1-\beta/\alpha}L_\sigma^{-2}\big)$ to see that $$\frac{L_b}{\gamma}+\frac{4\sqrt{C^{\star}\Gamma(1-\beta/\alpha)k}L_\sigma }{(2\gamma)^{{\frac{1-\beta/\alpha}{2}}}}\leq 3/4.$$ Finally, solve for $\mathcal{N}_{k,\gamma}(u_N)$ to find
		$\mathcal{N}_{k,4\big(4\sqrt{C^{\star}\Gamma(1-\beta/\alpha)k}L_\sigma \big)^{\frac{2}{1-\beta/\alpha}}}(u_N)\leq 4({\|u_0\|}_{L^{\infty}(\mathbb{R})}+1)$, that is, $$\sup\limits_{N>0}\sup\limits_{x\in\mathbb{R}}\mathbb{E}\big(|u_N(t,x)|^k\big)\leq 4^k({\|u_0\|}_{L^{\infty}(\mathbb{R})}+1)^k\exp{4\big(4\sqrt{C^{\star}\Gamma(1-\beta/\alpha)}L_\sigma \big)^{\frac{2}{1-\beta/\alpha}}k^{1+\frac{1}{1-\beta/\alpha}} t}.$$
		\begin{enumerate}
			\item[2.] \underline{$\sigma: \mathbb{R}_+\times\mathbb{R}\rightarrow\mathbb{R}$ is bounded}: 
		\end{enumerate}
		In this case, we modify the proof of the case $L_\sigma>0$ by first observing that \eqref{Trc-norm},  and \eqref{I1-bound} remain valid. We use \eqref{I2-sq} combined with \eqref{HeatKernel} to estimate $I_2$ as follows:
		\begin{equation}\label{I2-sq}
			I_2^2\leq 4kC^{\star}{\|\sigma\|}_{L^\infty(\mathbb{R_{+}}\times\mathbb{R})}^2 \int\limits_0^ts^{-\beta/\alpha} ds \leq \frac{4C^{\star}k}{1-\beta/\alpha}{\|\sigma\|}_{L^\infty(\mathbb{R_{+}}\times\mathbb{R})}^2 t^{1-\beta/\alpha}.
		\end{equation}
		This yields
		\begin{align*}
			{\|u_N(t,x)\|}_k\leq& {\|u_0\|}_{L^{\infty}(\mathbb{R})}+L_b\frac{e^{\gamma t}}{\gamma}\Big[1+\mathcal{N}_{k, \gamma}(u_N)\Big]+\frac{2\sqrt{C^{\star}k}}{\sqrt{1-\beta/\alpha}}{\|\sigma\|}_{L^\infty(\mathbb{R_{+}}\times\mathbb{R})}t^{\frac{1-\beta/\alpha}{2}}.
		\end{align*}

		Since  $t^ae^{-bt}\leq\Big( \frac{a}{b}\Big)^ae^{-a}=:C(a,b)$ for all $0<a<1$ and $b>0$, we get
		
			\begin{align*}
			{\|u_N(t,x)\|}_k\leq& {\|u_0\|}_{L^{\infty}(\mathbb{R})}+L_b\frac{e^{\gamma t}}{\gamma}\Big[1+\mathcal{N}_{k, \gamma}(u_N)\Big]+\frac{2\sqrt{C^{\star}k}}{\sqrt{1-\beta/\alpha}}{\|\sigma\|}_{L^\infty(\mathbb{R_{+}}\times\mathbb{R})}\Big(\frac{1-\beta/\alpha}{2\gamma}\Big)^{\frac{1-\beta/\alpha}{2}}e^{-\frac{1-\beta/\alpha}{2}}e^{\gamma t}.
		\end{align*}

		 Now divide both sides of the preceding inequality by $e^{\gamma t}$. Since the right-hand side of the resulting inequality does not depend on $(t,x)$,  optimizing over $(t,x)$ yields
		
		\begin{align*}
			\mathcal{N}_{k,\gamma}(u_N)\leq& {\|u_0\|}_{L^{\infty}(\mathbb{R})}+\frac{L_b}{\gamma}\Big[1+\mathcal{N}_{k, \gamma}(u_N)\Big]+\frac{2\sqrt{C^{\star}k}}{\sqrt{1-\beta/\alpha}}{\|\sigma\|}_{L^\infty(\mathbb{R_{+}}\times\mathbb{R})}C_{\alpha, \beta, \gamma}.
		\end{align*}
		uniformly for all real numbers $k\geq 2, N, \gamma>0.$ Now we set $\gamma=2L_b$ and solve for $\mathcal{N}_{k, 2L_b}(u_N)$ to find 
		\begin{align*}
			\mathcal{N}_{k, 2L_b}(u_N)\leq &2{\|u_0\|}_{L^{\infty}(\mathbb{R})}+ \frac{4C_{\alpha, \beta, \gamma}\sqrt{C^{\star}k}}{\sqrt{1-\beta/\alpha}}{\|\sigma\|}_{L^\infty(\mathbb{R_{+}}\times\mathbb{R})}+1 \\
			\leq & C_{\star}\Big({\|u_0\|}_{L^{\infty}(\mathbb{R})}+ {\|\sigma\|}_{L^\infty(\mathbb{R_{+}}\times\mathbb{R})}+1\Big)\sqrt{k},
		\end{align*}
		where $C_{\star}=\max\Big(2, \frac{4C_{\alpha, \beta, \gamma}\sqrt{C^{\star}}}{\sqrt{1-\beta/\alpha}}\Big)$. This is equivalent to
		$$
		\sup\limits_{N>0}\sup\limits_{x\in\mathbb{R}}\mathbb{E}\big(|u_N(t,x)|^k\big)\leq C_{\star}^k e^{2kL_bt}\Big({\|u_0\|}_{L^\infty(\mathbb{R})}+{\|\sigma\|}_{L^\infty(\mathbb{R_{+}}\times\mathbb{R})}+1\Big)^kk^{k/2}.
		$$
		This concludes the proof.
		
	\end{proof}
	
	\begin{proposition}\label{Prob6}
		\begin{enumerate}
			\item For $L_\sigma>0,$ we have 
			$$ \mathbb{P}\big(|u_{N+1}(t,x)|\geq e^N\big)\leq\exp{-\frac{N^{2-\beta/\alpha}}{(C_{\#}L_\sigma)^2 (8t)^{1-\beta/\alpha}}}
			$$
			uniformly for all $t>0$ $x\in\mathbb{R}$ and $N\geq 4\log C_0\vee 8C_{\#}^{\frac{2}{1-\beta/\alpha}}t\max\Big(2^{\frac{1}{1-\beta/\alpha}}L_\sigma^{\frac{2}{1-\beta/\alpha}}, L_b\Big)$ and $C_0, C_{\#}$, are defined in Proposition \ref{Prob4}.
			\item If $\sigma\in L^\infty (\mathbb{R}_+\times\mathbb{R})$, then
			$$ \mathbb{P}\big(|u_{N+1}(t,x)|\geq e^N\big)\leq\exp{-\frac{e^{2N-4L_bt}}{eC_{\star}^2\big({\|u_0\|}_{L^\infty(\mathbb{R})}+{\|\sigma\|}_{L^\infty(\mathbb{R_{+}}\times\mathbb{R})}+1\big)^2}}
			$$
			uniformly for all $t>0$ $x\in\mathbb{R}$ and $$N\geq\frac{1}{2}+\log C_{\star}+2L_bt+\log\big({\|u_0\|}_{L^\infty(\mathbb{R})}+{\|\sigma\|}_{L^\infty(\mathbb{R_{+}}\times\mathbb{R})}+1\big),$$ and $ C_{\star}$ is defined in Proposition \ref{Prob4}.
		\end{enumerate} 
	\end{proposition}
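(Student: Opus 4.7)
The plan is to combine Markov's inequality with the moment estimates just established in Proposition \ref{Prob4} and optimize the free exponent $k$ subject to the admissibility constraints of that proposition. Throughout, I start from
\[
\mathbb{P}\bigl(|u_{N+1}(t,x)| \geq e^N\bigr) \leq e^{-kN}\, \mathbb{E}\bigl(|u_{N+1}(t,x)|^k\bigr),
\]
substitute the appropriate moment bound (which, being uniform in $N$, applies to $u_{N+1}$), take logarithms, and select $k$ to produce the claimed tail decay.

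\textbf{Case (1): $L_\sigma > 0$.} Proposition \ref{Prob4}(1) yields
\[
\log \mathbb{P}\bigl(|u_{N+1}(t,x)| \geq e^N\bigr) \leq -kN + k\log C_0 + 4(C_\# L_\sigma)^{2/(1-\beta/\alpha)}\, k^{1+1/(1-\beta/\alpha)}\, t,
\]
valid for $k \geq \max\bigl(2, L_b^{1-\beta/\alpha}L_\sigma^{-2}\bigr)$. I would choose
\[
k = \left(\frac{N}{8(C_\# L_\sigma)^{2/(1-\beta/\alpha)}\, t}\right)^{1-\beta/\alpha},
\]
since with this choice $k^{1/(1-\beta/\alpha)} = N/(8(C_\# L_\sigma)^{2/(1-\beta/\alpha)} t)$ and the polynomial term collapses to $kN/2$, leaving a dominant $-kN/2$. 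The assumption $N \geq 4\log C_0$ absorbs the $k\log C_0$ term into a smaller fraction of $kN$. Translating the constraints $k \geq 2$ and $k \geq L_b^{1-\beta/\alpha}L_\sigma^{-2}$ directly into conditions on $N$ reproduces exactly the lower bound $N \geq 8 C_\#^{2/(1-\beta/\alpha)} t \max\bigl(2^{1/(1-\beta/\alpha)} L_\sigma^{2/(1-\beta/\alpha)}, L_b\bigr)$ stated in the proposition, and one reads off the announced tail exponent (up to the universal constants produced by the bookkeeping above).

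\textbf{Case (2): $\sigma$ bounded.} Setting $M := \log C_\star + 2L_b t + \log(\|u_0\|_{L^\infty(\mathbb{R})} + \|\sigma\|_{L^\infty(\mathbb{R}_+ \times \mathbb{R})} + 1)$ and inserting Proposition \ref{Prob4}(2) gives
\[
\log \mathbb{P}\bigl(|u_{N+1}(t,x)| \geq e^N\bigr) \leq -k(N - M) + \tfrac{k}{2}\log k,\qquad k \geq 2.
\]
The right-hand side is an elementary function of $k$ that I can minimize in closed form: setting its derivative to zero yields $k^\ast = \exp(2(N-M)-1)$, and substitution returns $-\tfrac{1}{2}\exp(2(N-M)-1)$, which after expanding $M$ gives the form of the announced bound. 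The admissibility constraint $k^\ast \geq 2$ reduces to $N \geq M + \tfrac{1}{2}$, which is precisely the stated lower bound on $N$.

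\textbf{Main obstacle.} None of the individual steps is deep; the real difficulty is the joint bookkeeping. The exponent $k$ must simultaneously (i) lie in the admissible range of Proposition \ref{Prob4}, (ii) make the $k^{1+1/(1-\beta/\alpha)}$ (resp.\ $k\log k$) term balance the $-kN$ gain from Markov rather than overwhelm it, and (iii) be large enough that the prefactor $C_0^k$ (resp.\ $C_\star^k k^{k/2}$) is dominated by $e^{-kN}$. The three lower bounds on $N$ that appear in the proposition are exactly what is needed so that a single explicit choice of $k$ satisfies all three requirements at once, which is why the hypothesis on $N$ is phrased as a maximum.
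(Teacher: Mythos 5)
Your proposal follows essentially the same route as the paper: Markov's inequality combined with the uniform-in-$N$ moment bounds of Proposition \ref{Prob4}, with the identical choice $k=\bigl(N/(8(C_{\#}L_\sigma)^{\frac{2}{1-\beta/\alpha}}t)\bigr)^{1-\beta/\alpha}$ in the first case and (via your closed-form optimization) the same $k=C_\Xi^{-2}e^{2N-1}$ in the second, and the constraints on $N$ are translated from the admissible range of $k$ exactly as in the paper. The small constant factors you leave implicit (a $1/4$, resp.\ $1/2$, in the exponent and a $\log 2$ in the threshold for $N$) are the same slack present in the paper's own computation, so this is not a substantive difference.
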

	\begin{proof}
		As usual, we treat the two cases separately.
		\begin{enumerate}
			\item \underline{$L_\sigma>0$}:
		\end{enumerate}
		By combining Proposition \ref{Prob4} first part and Markov's Inequality, we get
		\begin{align*}
			\mathbb{P}\big(|u_{N+1}(t,x)|\geq& e^N\big)\leq e^{-kN}\mathbb{E}\big(|u_{N+1}|^k\big) \\ 
			\leq & C_0^k\exp{-kN+4(C_{\#}L_\sigma)^{\frac{2}{1-\beta/\alpha}}k^{1+\frac{1}{1-\beta/\alpha}} t}
		\end{align*}
		uniformly for all real numbers $N, t>0, x\in\mathbb{R}$ and $k\geq \max\big(2,L_b^{1-\beta/\alpha}L_\sigma^{-2}\big)$. Now set $k=(AN)^{1-\beta/\alpha}$--where $A>0$ is to be determined--in order to see that
		\begin{align*}
			\mathbb{P}\big(|u_{N+1}(t,x)|\geq e^N\big)\leq&  
			\exp{-\Big(1-\frac{\log C_0}{N}-4(C_{\#}L_\sigma)^{\frac{2}{1-\beta/\alpha}} At\Big)\frac{N^{2-\beta/\alpha}}{A^{\beta/\alpha-1}}}.
		\end{align*}
		Now set $A=\Big(8(C_{\#}L_\sigma)^{\frac{2}{1-\beta/\alpha}} t\Big)^{-1}$  in order to see that
		\begin{align*}
			\mathbb{P}\big(|u_{N+1}(t,x)|\geq e^N\big)\leq&  
			\exp{-\Big(\frac{1}{2}-\frac{\log C_0}{N}\Big)\frac{N^{2-\beta/\alpha}}{(C_{\#}L_\sigma)^2 (8t)^{1-\beta/\alpha}}},
		\end{align*}
		which is the desired outcome when $L_\sigma>0$ provided additionally that $N\geq 4\log C_0\vee 8C_{\#}^{\frac{2}{1-\beta/\alpha}}t\max\Big(2^{\frac{1}{1-\beta/\alpha}}L_\sigma^{\frac{2}{1-\beta/\alpha}}, L_b\Big)$.
		\begin{enumerate}
			\item[2.] \underline{$\sigma\in L^\infty(\mathbb{R}_+\times\mathbb{R})$}: 
		\end{enumerate}
		This case is proved similarly, but uses the second part of Proposition \ref{Prob4} instead. We get
		\begin{equation}\label{PN}
			\mathbb{P}\big(|u_{N+1}(t,x)|\geq e^N\big) 
			\leq  C_{\Xi}^ke^{-kN}k^{k/2},
		\end{equation}
		where $C_{\Xi}=C_{\star}e^{2L_bt}\big({\|u_0\|}_{L^\infty(\mathbb{R})}+{\|\sigma\|}_{L^\infty(\mathbb{R_{+}}\times\mathbb{R})}+1\big)$. Next, set $k=C_{\Xi}^{-2}e^{2N-1}$ in \eqref{PN} to find that
		\begin{align*}
			\mathbb{P}\big(|u_{N+1}(t,x)|\geq e^N\big) 
			\leq\exp{-\frac{e^{2N-4L_b}}{eC_{\star}^2\big({\|u_0\|}_{L^\infty(\mathbb{R})}+{\|\sigma\|}_{L^\infty(\mathbb{R_{+}}\times\mathbb{R})}+1\big)^2}}
		\end{align*}
		uniformly for $N\geq\frac{1}{2}+\log C_{\star}+2L_bt+\log\big({\|u_0\|}_{L^\infty(\mathbb{R})}+{\|\sigma\|}_{L^\infty(\mathbb{R_{+}}\times\mathbb{R})}+1\big)$ and this concludes the proof.
	\end{proof}
	\begin{proposition}[\cite{FooSanLart}, Lemma 2.5]\label{lm:2.5}
		Consider a function $f:\mathbb{R}_+\rightarrow\mathbb{R}_+$ and an increasing function   $g:\mathbb{R}_+\rightarrow\mathbb{R}_+$. If there exists $a,T_0>$ such that
		$$
		\sup\limits_{t\in(0,T]}\big[e^{-at}f(t)\big]\leq e^{-aT}g(T) \qquad \forall T\in(0,T_0),
		$$
		then $\sup\limits_{t\in(0,T]}f(t)\leq g(T) \qquad \forall T\in(0,T_0).$
	\end{proposition}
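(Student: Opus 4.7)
The plan is to exploit the hypothesis pointwise by applying it at each intermediate time rather than only at the fixed endpoint $T$, and then to use the monotonicity of $g$ to glue the resulting pointwise bounds into the desired uniform bound. The key observation is that the hypothesized inequality is really a family of inequalities parametrized by the upper endpoint of the supremum, so useful information is available at every scale below $T_0$, not just at $T$ itself.

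Concretely, I would fix $T \in (0, T_0)$ and let $t \in (0, T]$ be arbitrary. Since $t \leq T < T_0$, the hypothesis applies with $t$ in place of $T$ and yields
\[
\sup_{s \in (0, t]} \bigl[e^{-as} f(s)\bigr] \leq e^{-at} g(t).
\]
Retaining only the term corresponding to $s = t$ on the left and cancelling the factor $e^{-at}$ from both sides produces the pointwise bound $f(t) \leq g(t)$. Monotonicity of $g$ together with $t \leq T$ then upgrades this to $f(t) \leq g(T)$, and taking the supremum over $t \in (0, T]$ completes the proof.

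I do not anticipate any real obstacle here: the argument is essentially a one-liner once one notices that the hypothesis should be instantiated at the running variable $t$ rather than at the fixed endpoint $T$. The only small point to verify is that every $t$ appearing in the outer supremum indeed lies in $(0, T_0)$ so that the hypothesis is applicable; this is immediate from $0 < t \leq T < T_0$. The monotonicity of $g$ is exactly what lets one replace $g(t)$ by $g(T)$ uniformly in $t$, and without it the conclusion could fail.
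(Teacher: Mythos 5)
Your argument is correct: since $t\le T<T_0$, the hypothesis may indeed be applied with $t$ as the endpoint, giving $e^{-at}f(t)\le e^{-at}g(t)$, hence $f(t)\le g(t)\le g(T)$ by monotonicity, and the supremum over $t\in(0,T]$ yields the claim. The paper itself offers no proof of this proposition (it is quoted from Lemma 2.5 of \cite{FooSanLart}), and your one-line instantiation-plus-monotonicity argument is exactly the standard proof of that lemma, so there is nothing to fix.
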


	We are now in position to prove our main result.

	\section{Proof of Theorem \ref{MainTh}}\label{Proof-Thm}
	We follow the main steps in the proof of the main Theorem in \cite{FooSanLart}. We again treat the two cases separately, although a slight modification of the argument used for the first case can lead to the proof of the second case.
	\subsection{Proof of existence}
	
	The proof of existence consists of two steps: we first show that the sequence of truncated solutions $\big\{u_N\big\}_N$ converges pointwise in $(t,x)$ in $L^2(\Omega)$ to a random field, say $u$. We then show that the random field $u$ is a mild solution of Eq \eqref{MainEq}.

	\subsubsection{Proof of existence when $L_\sigma>0$}
	
	\textbf{Step 1:} For all fixed $T>0$, we show that $
		\sum\limits_{N=1}^\infty\sup\limits_{t\in(0,T]}   \sup\limits_{x\in\mathbb{R}}{\|u_{N+1}(t,x)-u_N(t,x)\|}_{k} <\infty \qquad \forall k\geq 1.
	$
	 To this aim, we slightly modify \eqref{N-norm} and define
	\begin{equation}\label{NT-norm}
		\mathcal{N}_{k, T,\gamma}(Z)=\sup\limits_{t\in(0,T]}\sup\limits_{x\in\mathbb{R}}e^{-\gamma t} {\|Z(t,x)\|}_k
	\end{equation}
	for every $\gamma>0, k\geq 1,$ and all space-time random fields $Z.$ Note that $N\mapsto\text{Lip}_N(b)$ and $N\mapsto\text{Lip}_N(\sigma)$ are non-decreasing, and $b$ and $\sigma$ are globally Lipschitz when $0<\lim\limits_{N\rightarrow\infty}\text{Lip}_N(b),\text{Lip}_N(\sigma)<\infty.$ Thus, without loss of generality, we assume that 
	\begin{equation}\label{lim-Lip}
		\lim\limits_{N\rightarrow\infty} \text{L}_{N,\sigma}=\infty,
	\end{equation}
	where $\text{Lip}_{N,\sigma}$ is defined in \eqref{LN}.
	
	Using \eqref{Trc-MildSol}, for all $k\geq 1, t>0$ and $x\in\mathbb{R}$, we have
	\begin{equation}\label{Norm-Diff}
		{\|u_{N+1}(t,x)-u_N(t,x)\|}_k\leq I_1+I_2,
	\end{equation}
	where
	\begin{equation}\label{I1-thm}
		I_1=\int\limits_0^t\int\limits_{\mathbb{R}}G_{t-s}(y-x){\|b_{N+1}\big(s,u_{N+1}(s,y)\big)-b_{N}\big(s,u_{N}(s,y)\big)\|}_k ds\ dy  
	\end{equation}
	and
	\begin{equation}\label{I2-thm}
		I_2=\left\Vert\int\limits_0^t\int\limits_{\mathbb{R}}G_{t-s}(y-x)\big[\sigma_{N+1}\big(s,u_{N+1}(s,y)\big)-\sigma_{N}\big(s,u_{N}(s,y)\big)\big] ds\ dy  \right\Vert_k.
	\end{equation}
	For all $N,s>0$ and $y\in \mathbb{R}$, consider the event 
	\begin{equation}\label{GN}
		\mathcal{G}_{N+1}(s,y)=\{\omega\in\Omega: |u_{N+1}(s,y)|(\omega)\leq e^N\}.
	\end{equation}
	First note that
	\begin{equation}\label{b-lower}
		\begin{split}
			&\left\Vert \big[b_{N+1}\big(s,u_{N+1}(s,y)\big)-b_{N}\big(s,u_{N}(s,y)\big)\big]\mathbf{1}_{\mathcal{G}_{N+1}}(s,y)\right\Vert_k \\
			& \qquad\leq  \left\Vert b_{N+1}\big(s,u_{N+1}(s,y)\big)-b_{N}\big(s,u_{N}(s,y)\big)\right\Vert_k\\
			&\qquad\leq  \text{L}_{N,b}\left\Vert u_{N+1}(s,y)-u_{N}(s,y)\right\Vert_k \leq \text{L}_{N,b}e^{\gamma s}\mathcal{N}_{k, T,\gamma}(u_{N+1}-u_N)
		\end{split}
	\end{equation}
	for all $\gamma, N>0, s\in(0,T]$ and $y\in\mathbb{R}$.
	
	Moreover,
	
	\begin{align*}
		&\left\Vert \big[b_{N+1}\big(s,u_{N+1}(s,y)\big)-b_{N}\big(s,u_{N}(s,y)\big)\big]\mathbf{1}_{\Omega\setminus\mathcal{G}_{N+1}}(s,y)\right\Vert_k \\
		& \qquad\leq  \left\Vert b_{N+1}\big(s,u_{N+1}(s,y)\big)\mathbf{1}_{\Omega\setminus\mathcal{G}_{N+1}}(s,y)\right\Vert_k+\left\Vert b_{N}\big(s,u_{N}(s,y)\big)\mathbf{1}_{\Omega\setminus\mathcal{G}_{N+1}}(s,y)\right\Vert_k\\
		&\qquad\leq \Big[\left\Vert b_{N+1}\big(s,u_{N+1}(s,y)\big)\right\Vert_{2k}+ \left\Vert b_{N}\big(s,u_{N}(s,y)\big)\right\Vert_{2k} \Big]\Big[1-\mathbb{P}\big( \mathcal{G}_{N+1}(s,y)\big)\Big]^{\frac{1}{2k}},
	\end{align*}
	where we have used, in the last line, the following  variation of the Cauchy-Schwarz inequality: $\left\Vert X\mathbf{1}_F\right\vert_k\leq \left\Vert X\right\Vert_{2k}[\mathbb{P}(F)]^{\frac{1}{2k}}$ for all $X\in L^k(\Omega)$ and $F\subset\Omega.$
	
	Now, using the first part of Proposition \ref{Prob4}, it follows that
	\begin{align*}
		&\left\Vert b_{N+1}\big(s,u_{N+1}(s,y)\big)\right\Vert_{2k}+ \left\Vert b_{N}\big(s,u_{N}(s,y)\big)\right\Vert_{2k}\\
		&\qquad\leq  L_b\Big[\left\Vert u_{N+1}(s,y)\right\Vert_{2k}+ \left\Vert u_{N}(s,y)\big)\right\Vert_{2k} \Big] \leq 2C_0L_b\exp{4(C_{\#}L_\sigma)^{\frac{2}{1-\beta/\alpha}}k^{\frac{1}{1-\beta/\alpha}} s}
	\end{align*}
	uniformly for all $N,s>0, y\in\mathbb{R}$ and $k\geq\max\big(1,\frac{1}{2}L_b^{1-\beta/\alpha}L_{\sigma}^{-2}\big)$. It follows that
	\begin{align*}
		&\left\Vert \big[b_{N+1}\big(s,u_{N+1}(s,y)\big)-b_{N}\big(s,u_{N}(s,y)\big)\big]\mathbf{1}_{\Omega\setminus\mathcal{G}_{N+1}}(s,y)\right\Vert_k \\
		& \qquad\leq 2C_0L_b\exp{4(C_{\#}L_\sigma)^{\frac{2}{1-\beta/\alpha}}k^{\frac{1}{1-\beta/\alpha}} s}\Big[\mathbb{P}\big( |u_{N+1}(s,y)|\geq e^N\big)\Big]^{\frac{1}{2k}},
	\end{align*}
	valid uniformly for all $N,s>0, y\in\mathbb{R}$ and $k\geq\max\big(1,\frac{1}{2}L_b^{1-\beta/\alpha}L_{\sigma}^{-2}\big)$. Therefore, Proposition \ref{Prob6}--part 1 yields
	\begin{equation}\label{bN-part2}
		\begin{split}
			&\left\Vert \big[b_{N+1}\big(s,u_{N+1}(s,y)\big)-b_{N}\big(s,u_{N}(s,y)\big)\big]\mathbf{1}_{\Omega\setminus\mathcal{G}_{N+1}}(s,y)\right\Vert_k \\
			& \qquad\leq 2C_0L_b\exp{4(C_{\#}L_\sigma)^{\frac{2}{1-\beta/\alpha}}k^{\frac{1}{1-\beta/\alpha}} s}\exp{-\frac{N^{2-\beta/\alpha}}{2k(C_{\#}L_\sigma)^2 (8s)^{1-\beta/\alpha}}},
		\end{split}
	\end{equation}
	valid uniformly for all $s\in(0,T]$, $y\in\mathbb{R}$, $N\geq 4\log C_0\vee 8C_{\#}^{\frac{2}{1-\beta/\alpha}}t\max\Big(2^{\frac{1}{1-\beta/\alpha}}L_\sigma^{\frac{2}{1-\beta/\alpha}}, L_b\Big)$ and $k\geq\max\big(1,\frac{1}{2}L_b^{1-\beta/\alpha}L_{\sigma}^{-2}\big)$. Thus we find that
	\begin{equation}\label{I1-3.3}
		\begin{split}
			I_1\leq& \text{L}_{N,b}e^{\gamma t}\mathcal{N}_{k,\gamma,T}(u_{N+1}-u_N)\int\limits_0^t\int\limits_{\mathbb{R}}e^{-\gamma(t-s)}G_{t-s}(y-x)\ dy \ ds\\
			&\qquad+ 2C_0L_b\int\limits_0^t\int\limits_{\mathbb{R}}\exp{4(C_{\#}L_\sigma)^{\frac{2}{1-\beta/\alpha}}k^{\frac{1}{1-\beta/\alpha}} s}\exp{-\frac{N^{2-\beta/\alpha}}{2k(C_{\#}L_\sigma)^2 (8s)^{1-\beta/\alpha}}} G_{t-s}(y-x)\ dyds\\
			\leq & \frac{\text{L}_{N,b}e^{\gamma t}}{\gamma}\mathcal{N}_{k,\gamma,T}(u_{N+1}-u_N)\\
			& \qquad+\frac{2C_0L_b}{4(C_{\#}L_\sigma)^{\frac{2}{1-\beta/\alpha}}k^{\frac{1}{1-\beta/\alpha}}}\exp{4(C_{\#}L_\sigma)^{\frac{2}{1-\beta/\alpha}}k^{\frac{1}{1-\beta/\alpha}} t-\frac{N^{2-\beta/\alpha}}{2k(C_{\#}L_\sigma)^2 (8t)^{1-\beta/\alpha}}} 
		\end{split}
	\end{equation}
	for all $t\in(0,T]$ and $x\in\mathbb{R}$, provided that $N,k\geq c$ where 
	\begin{equation}\label{c}
		c=c\big(\alpha, \beta, T, L_b,L_\sigma, {\| u_0\|}_{L^{\infty}(\mathbb{R})}\big)>1 
	\end{equation}
	is a constant whose value is unimportant to this analysis. It is worth mentioning that the constant $c,$ while fixed, can be chosen to be as large as we wish. Keeping in mind \eqref{Ass3-1}, we therefore select $c\big(\alpha, \beta, T, L_b,L_\sigma, {\| u_0\|}_{L^{\infty}(\mathbb{R})}\big)$ large enough to also ensure that
	
	\begin{equation}\label{c-sup}
		c>\sup\limits_{N\geq N_0}\frac{ \text{L}_{N,b}^{1-\beta/\alpha}}{ \text{L}_{N,\sigma}^{2}}, \ \text{where} \ N_0=\inf{\{N>1: \text{L}_{N,\sigma}>1 \}}.
	\end{equation}
	Clearly $N_0<\infty$  thanks to \eqref{lim-Lip}. 
	
	We now estimate $I_2$. To this aim, we apply the Burkh\"older-Davis-Gundy inequality (see for example \cite{DaViMuLarXiao}) to find that
	\begin{equation}\label{I2-BDG}
		I_2^2\leq\int\limits_0^t\int\limits_{\mathbb{R}}\Big[G_{t-s}(y-x)\big]^2\left\Vert\sigma_{N+1}\big(s,u_{N+1}(s,y)\big)-\sigma_{N}\big(s,u_{N}(s,y)\big)\right\Vert_k^2 dy\ ds  .
	\end{equation}
	As with the function $b_N$, we also have
	\begin{equation}\label{sig-lower}
		\begin{split}
			&\left\Vert \big[\sigma_{N+1}\big(s,u_{N+1}(s,y)\big)-\sigma_{N}\big(s,u_{N}(s,y)\big)\big]\mathbf{1}_{\mathcal{G}_{N+1}}(s,y)\right\Vert_k \\
			&\qquad\leq \text{L}_{N,\sigma}e^{\gamma s}\mathcal{N}_{k, T,\gamma}(u_{N+1}-u_N)
		\end{split}
	\end{equation}
	for all $\gamma, N>0, s\in(0,T]$ and $y\in\mathbb{R}$. Moreover,
	
	\begin{align*}
		&\left\Vert \big[\sigma_{N+1}\big(s,u_{N+1}(s,y)\big)-\sigma_{N}\big(s,u_{N}(s,y)\big)\big]\mathbf{1}_{\Omega\setminus\mathcal{G}_{N+1}}(s,y)\right\Vert_k \\
		& \qquad\leq 2C_0L_\sigma\exp{4(C_{\#}L_\sigma)^{\frac{2}{1-\beta/\alpha}}k^{\frac{1}{1-\beta/\alpha}} s}\exp{-\frac{N^{2-\beta/\alpha}}{2k(C_{\#}L_\sigma)^2 (8s)^{1-\beta/\alpha}}},
	\end{align*}
	uniformly for all $s\in(0,T]$, $y\in\mathbb{R}$, $k\geq 1$ and 
	\begin{equation}\label{cT}
		N\geq 4\log C_0\vee 8C_{\#}^{\frac{2}{1-\beta/\alpha}}T\max\Big(2^{\frac{1}{1-\beta/\alpha}}L_\sigma^{\frac{2}{1-\beta/\alpha}}, L_b\Big):=c_T. 
	\end{equation}
	It follows that
	\begin{align*}
		I_2^2\leq&8k \text{L}_{N,\sigma}^2e^{2\gamma t}\big[\mathcal{N}_{k, T,\gamma}(u_{N+1}-u_N)\big]^2 \int\limits_0^t\int\limits_{\mathbb{R}}e^{-2\gamma(t-s)}\big[G_{t-s}(y-x)\big]^2dy\ ds\\
		&\qquad+ 16kC_0^2L_\sigma^2\int\limits_0^t\int\limits_{\mathbb{R}}\exp{4(C_{\#}L_\sigma)^{\frac{2}{1-\beta/\alpha}}k^{\frac{1}{1-\beta/\alpha}} s}\exp{-\frac{N^{2-\beta/\alpha}}{2k(C_{\#}L_\sigma)^2 (8s)^{1-\beta/\alpha}}}\big[G_{t-s}(y-x)\big]^2dy\ ds\\
		\leq& \frac{8kC^{\star} \text{L}_{N,\sigma}^2e^{2\gamma t}\Gamma(1-\beta/\alpha)}{(2\gamma)^{1-\beta/\alpha}}\big[\mathcal{N}_{k, T,\gamma}(u_{N+1}-u_N)\big]^2\\
		&\qquad +\frac{16kC^{\star}C_0^2L_\sigma^2}{1-\beta/\alpha}\exp{4(C_{\#}L_\sigma)^{\frac{2}{1-\beta/\alpha}}k^{\frac{1}{1-\beta/\alpha}} t}\exp{-\frac{N^{2-\beta/\alpha}}{k(C_{\#}L_\sigma)^2 (8t)^{1-\beta/\alpha}}}t^{1-\beta/\alpha}
	\end{align*}
	for all $\gamma>0, t\in(0,T]$ and $x\in\mathbb{R}$, provided that $N\geq \max(c,c_T)$ and $k\geq c$, where the constants $c$ and $c_T$ are defined in \eqref{c} and \eqref{cT}, respectively.  We now combine \eqref{Norm-Diff} and \eqref{I1-3.3} to find
	\begin{equation}\label{norm-Cauchy}
		\begin{split}
			&{\|u_{N+1}(t,x)-u_N(t,x)\|}_k\leq e^{\gamma t}\Big[\frac{\text{L}_{N,b}}{\gamma}+\frac{2^{1+\beta/(2\alpha)}\sqrt{kC^{\star}\Gamma(1-\beta/\alpha)}\text{L}_{N,\sigma}}{\gamma^{\frac{1-\beta/\alpha}{2}}}\Big]\mathcal{N}_{k,\gamma,T}(u_{N+1}-u_N)\\
			& \qquad+\frac{2C_0L_b}{4(C_{\#}L_\sigma)^{\frac{2}{1-\beta/\alpha}}k^{\frac{1}{1-\beta/\alpha}}}\exp{4(C_{\#}L_\sigma)^{\frac{2}{1-\beta/\alpha}}k^{\frac{1}{1-\beta/\alpha}} t-\frac{N^{2-\beta/\alpha}}{2k(C_{\#}L_\sigma)^2 (8t)^{1-\beta/\alpha}}}\\
			&\qquad+ \frac{4\sqrt{kC^{\star}}C_0L_\sigma}{\sqrt{1-\beta/\alpha}}\exp{4(C_{\#}L_\sigma)^{\frac{2}{1-\beta/\alpha}}k^{\frac{1}{1-\beta/\alpha}} t}\exp{-\frac{N^{2-\beta/\alpha}}{2k(C_{\#}L_\sigma)^2 (8t)^{1-\beta/\alpha}}}t^{\frac{1-\beta/\alpha}{2}}
		\end{split}
	\end{equation}
	as long as $N\geq\max(c,c_T)$ and $k\geq c$. Next, set 
	\begin{equation}\label{k-and-g}
		k=c \qquad  \text{and} \qquad  \gamma=16\big[C^{\star}\Gamma(1-\beta/\alpha)\big]^{\frac{1}{1-\beta/\alpha}}A_0^{\frac{2}{1-\beta/\alpha}}k^{\frac{1}{1-\beta/\alpha}} \text{L}_{N,\sigma}^{\frac{2}{1-\beta/\alpha}},
	\end{equation}
	where 
	\begin{equation}\label{A0}
		A_0:=\max\Bigg(4, \frac{C_{\#}L_\sigma}{\sqrt{C^{\star}\Gamma(1-\beta/\alpha)}}, \big[C^{\star}\Gamma(1-\beta/\alpha)\big]^{-2}\Bigg).
	\end{equation}
	Note that $\gamma$ depends on $N$ and $T.$ Therefore, we find that

	\begin{align*}
		\sup\limits_{x\in\mathbb{R}}{\|u_{N+1}(t,x)-u_N(t,x)\|}_c&\leq e^{\gamma t}\Bigg[\frac{\text{L}_{N,b}}{16\big[C^{\star}\Gamma(1-\beta/\alpha)\big]^{\frac{1}{1-\beta/\alpha}}A_0^{\frac{2}{1-\beta/\alpha}}c^{\frac{1}{1-\beta/\alpha}} \text{L}_{N,\sigma}^{\frac{2}{1-\beta/\alpha}}}+\frac{1}{A_0}\Bigg]\mathcal{N}_{c,\gamma,T}(u_{N+1}-u_N)\\
		& +C_0\Bigg[\frac{2L_b}{4(C_{\#}L_\sigma)^{\frac{2}{1-\beta/\alpha}}c^{\frac{1}{1-\beta/\alpha}}}
		+ 4\sqrt{\frac{cC^{\star}}{1-\beta/\alpha}}L_\sigma t^{\frac{1-\beta/\alpha}{2}}\Bigg]\\
		&\qquad\qquad\times\exp{4(C_{\#}L_\sigma)^{\frac{2}{1-\beta/\alpha}}c^{\frac{1}{1-\beta/\alpha}} t-\frac{N^{2-\beta/\alpha}}{2c(C_{\#}L_\sigma)^2 (8t)^{1-\beta/\alpha}}}
	\end{align*}
	uniformly for all $T>0, t\in(0,T],$ and $N\geq \max(N_0,c,c_T).$ 
	 
	 
	We now apply \eqref{c-sup} to get
	
	\begin{align*}
		\sup\limits_{x\in\mathbb{R}}{\|u_{N+1}(t,x)-u_N(t,x)\|}_c&\leq e^{\gamma t}\Bigg[\frac{1}{16\big[C^{\star}\Gamma(1-\beta/\alpha)\big]^{\frac{1}{1-\beta/\alpha}}A_0^{\frac{2}{1-\beta/\alpha}}}+\frac{1}{A_0}\Bigg]\mathcal{N}_{c,\gamma,T}(u_{N+1}-u_N)\\
		& \qquad\qquad+C_0\Bigg[\frac{2L_b}{4(C_{\#}L_\sigma)^{\frac{2}{1-\beta/\alpha}}c^{\frac{1}{1-\beta/\alpha}}}
		+ 4\sqrt{\frac{cC^{\star}}{1-\beta/\alpha}}L_\sigma t^{\frac{1-\beta/\alpha}{2}}\Bigg]\\
		&\qquad\qquad\times\exp{4(C_{\#}L_\sigma)^{\frac{2}{1-\beta/\alpha}}c^{\frac{1}{1-\beta/\alpha}} t-\frac{N^{2-\beta/\alpha}}{2c(C_{\#}L_\sigma)^2 (8t)^{1-\beta/\alpha}}}\\
		&\leq e^{\gamma t}\Bigg[\frac{1}{16}+\frac{1}{4}\Bigg]\mathcal{N}_{c,\gamma,T}(u_{N+1}-u_N)\\
		& \qquad\qquad+C_0\Bigg[\frac{2L_b}{4(C_{\#}L_\sigma)^{\frac{2}{1-\beta/\alpha}}c^{\frac{1}{1-\beta/\alpha}}}
		+ 4\sqrt{\frac{cC^{\star}}{1-\beta/\alpha}}L_\sigma t^{\frac{1-\beta/\alpha}{2}}\Bigg]\\
		&\qquad\qquad\times\exp{4(C_{\#}L_\sigma)^{\frac{2}{1-\beta/\alpha}}c^{\frac{1}{1-\beta/\alpha}} t-\frac{N^{2-\beta/\alpha}}{2c(C_{\#}L_\sigma)^2 (8t)^{1-\beta/\alpha}}}\\
	\end{align*}
	uniformly for all $T>0, t\in(0,T],$ and $N\geq \max(N_0,c,c_T);$ Since $1/16+1/4<1/2,$ we proceed to divide both sides of the preceding by $e^{\gamma t}$ and optimize over $t\in(0,T]$ in order to find that
	\begin{equation}\label{N-bound}
		\begin{split}
			\mathcal{N}_{c,\gamma,T}(u_{N+1}-u_N)\leq&
			2C_0\Bigg[\frac{2L_b}{4(C_{\#}L_\sigma)^{\frac{2}{1-\beta/\alpha}}c^{\frac{1}{1-\beta/\alpha}}}
			+ 4\sqrt{\frac{cC^{\star}}{1-\beta/\alpha}}L_\sigma t^{\frac{1-\beta/\alpha}{2}}\Bigg]\\
			&\qquad\qquad\qquad\times\sup\limits_{t\in(0,T]}\exp{-\big[\gamma-4(C_{\#}L_\sigma)^{\frac{2}{1-\beta/\alpha}}c^{\frac{1}{1-\beta/\alpha}}\big] t-\frac{N^{2-\beta/\alpha}}{2c(C_{\#}L_\sigma)^2 (8t)^{1-\beta/\alpha}}},
		\end{split}
	\end{equation}
	uniformly for all $T>0$, $ t\in(0,T],$ and $N\geq \max(N_0,c,c_T)$. Combining \eqref{c-sup}, \eqref{k-and-g}, and \eqref{A0}, we get
	\begin{align*}
		\gamma-4(C_{\#}L_\sigma)^{\frac{2}{1-\beta/\alpha}}c^{\frac{1}{1-\beta/\alpha}}&\geq c^{\frac{1}{1-\beta/\alpha}}\Big(16\big[C^{\star}\Gamma(1-\beta/\alpha)\big]^{\frac{1}{1-\beta/\alpha}}A_0^{\frac{2}{1-\beta/\alpha}} \text{L}_{N,\sigma}^{\frac{2}{1-\beta/\alpha}}-4(C_{\#}L_\sigma)^{\frac{2}{1-\beta/\alpha}}\Big) \\
		&\geq c^{\frac{1}{1-\beta/\alpha}}\Big(16\big[C^{\star}\Gamma(1-\beta/\alpha)\big]^{\frac{1}{1-\beta/\alpha}}A_0^{\frac{2}{1-\beta/\alpha}} -4(C_{\#}L_\sigma)^{\frac{2}{1-\beta/\alpha}}\Big)>0,
	\end{align*}
	uniformly for $N\geq N_0.$
	
	We now consider the function 
	$$
	\psi(t)=\big[\gamma-4(C_{\#}L_\sigma)^{\frac{2}{1-\beta/\alpha}}c^{\frac{1}{1-\beta/\alpha}}\big] t+\frac{N^{2-\beta/\alpha}}{2c(C_{\#}L_\sigma)^2 (8t)^{1-\beta/\alpha}} \quad \text{for} \ t>0.
	$$
	It is not hard to show that 
	
	\begin{align*}
		\psi'(t)&\leq \gamma-\frac{1-\beta/\alpha}{2^{4-3\beta/\alpha}c(C_{\#}L_\sigma)^2 }(N/T)^{2-\beta/\alpha}\\
		&=16\big[C^{*}\Gamma(1-\beta/\alpha)\big]^{\frac{1}{1-\beta/\alpha}}A_0^{\frac{2}{1-\beta/\alpha}}c^{\frac{1}{1-\beta/\alpha}} \text{L}_{N,\sigma}^{\frac{2}{1-\beta/\alpha}}-\frac{1-\beta/\alpha}{2^{4-3\beta/\alpha}c(C_{\#}L_\sigma)^2 }(N/T)^{2-\beta/\alpha} \qquad \forall 0<t\leq T. 
	\end{align*}

	It follows that $\psi'<0$ everywhere on $(0,T]$ provided that
	\begin{equation}\label{NT}
		\frac{N}{ \text{L}_{N,\sigma}^{\frac{2}{(1-\beta/\alpha)(2-\beta/\alpha)}}}>\frac{c^{\frac{1}{1-\beta/\alpha}}(C_{\#}L_\sigma)^{\frac{2}{2-\beta/\alpha}} \big[C^{\star}\Gamma(1-\beta/\alpha)A_0^2\big]^{\frac{1}{(1-\beta/\alpha)(2-\beta/\alpha)}}T}{(1-\beta/\alpha)^{\frac{1}{2-\beta/\alpha}}}=:N_T.
	\end{equation}
	
Clearly the left-hand side of  \eqref{NT} is well-defined--for example when $N_T>N_0.$ By \eqref{Ass3-1}, we have $\frac{N}{ \text{L}_{N,\sigma}^{\frac{2}{(1-\beta/\alpha)(2-\beta/\alpha)}}}\rightarrow\infty$  as $N\rightarrow\infty.$ Therefore, \eqref{NT} holds for every $N>\max(N_0,N_T).$ It follows that
$\inf\limits_{t\in(0,T]}\psi(t)=\psi(T) \quad \text{for all} \  N>\max(N_0,N_T), \ T>0.$
	
Therefore, it follows from \eqref{N-bound} that, for every $T_0>0$ and $N\geq \max(N_0,N_{T_0},c_0,c_{T_0}),$
	
	\begin{align*}
		\begin{split}
			\mathcal{N}_{c,\gamma,T}(u_{N+1}-u_N)\leq&
			2C_0\Bigg[\frac{2L_b}{4(C_{\#}L_\sigma)^{\frac{2}{1-\beta/\alpha}}c^{\frac{1}{1-\beta/\alpha}}}
			+ 4\sqrt{\frac{cC^{\star}}{1-\beta/\alpha}}L_\sigma T^{\frac{1-\beta/\alpha}{2}}\Bigg]\\
			&\qquad\qquad\qquad\times\exp{-\big[\gamma-4(C_{\#}L_\sigma)^{\frac{2}{1-\beta/\alpha}}c^{\frac{1}{1-\beta/\alpha}}\big] T-\frac{N^{2-\beta/\alpha}}{2c(C_{\#}L_\sigma)^2 (8T)^{1-\beta/\alpha}}},
		\end{split}
	\end{align*}
	uniformly for all $T\in(0,T_0).$ We have also used the fact that $T\mapsto c_T$ is increasing thanks to \eqref{cT}. We now apply Proposition \ref{lm:2.5} in order to deduce from the above and \eqref{NT-norm} that, for every $T>0$ fixed,
	\begin{equation}\label{limSup}
		\limsup\limits_{N\rightarrow\infty}N^{-(2-\beta/\alpha)}\log\sup\limits_{t\in(0,T]}\sup\limits_{x\in\mathbb{R}}{\|u_{N+1}(t,x)-u_N(t,x)\|}_{k}<0 \qquad\forall k\geq 1.
	\end{equation}
	We note here that \eqref{limSup} clearly holds when $k\in[1,c]$. If $k>c$, then we can relabel $k$ as $c$ and vice-versa, thanks to the fact we can choose $c$ to be as large as we wish.
	This proves that 
	\begin{equation}\label{sum-Lk}
		\sum\limits_{N=1}^\infty\sup\limits_{t\in(0,T]}   \sup\limits_{x\in\mathbb{R}}{\|u_{N+1}(t,x)-u_N(t,x)\|}_{k} <\infty \qquad \forall k\geq 1,
	\end{equation} and Step 1 is proved.
	\eqref{sum-Lk} also proves that 
	\begin{equation}\label{lim-uN}
		u(t,x)=\lim\limits_{N\rightarrow\infty}u_N(t,x) \ \text{exists in } \ L^{k}(\Omega),
	\end{equation}
	and the rate of convergence does not depend on $t\in(0,T]$ nor $x\in\mathbb{R}.$ As a direct consequence, $u$ is $L^k-$continuous. This ensures that $u$ has predictable version.

	\textbf{Step 2:} We show that $u$ is a mild solution of Eq. \eqref{MainEq}.

	With this in mind, for all $t\in(0,T]$ and $x\in\mathbb{R},$ we show that 
	
	\begin{equation}\label{lim-IN}
		\lim\limits_{N\rightarrow\infty}\mathcal{I}_{b_N}^N(t,x)=\mathcal{I}_b(t,x) \ \text{and} \  \lim\limits_{N\rightarrow\infty}\mathcal{I}_{\sigma_N}^N(t,x)=\mathcal{I}_\sigma(t,x), 
	\end{equation}
	where both limits hold in $L^2(\Omega)$ and the random fields $\mathcal{I}_b,\mathcal{I}_\sigma,\mathcal{I}_b^N \ \text{and} \ \mathcal{I}_\sigma^N$ were defined in \eqref{Ib}, \eqref{Is}, \eqref{IbN} and \eqref{IsN}.  Then \eqref{Trc-MildSol} and \eqref{lim-uN} will ensure that $u$  is a mild solution to \eqref{MainEq}.
	
	Since 
	\begin{equation}\label{b-bN}
		|b_N(t,z)-b(t,z)|\leq \big(|b_N(t,z)|+|b(t,z)|\big)\mathbf{1}_{\{|z|>e^N\}}\leq 2L_b(1+|z|)\mathbf{1}_{\{|z|>e^N\}}(z) \ \text{for all} \ z\in\mathbb{R} \ \text{and}\ t,N>0,
	\end{equation}
it follows that
	\begin{align*}
		&\left\Vert\mathcal{I}_{b_N}^N(t,x) - \int\limits_0^t \int\limits_{\mathbb{R}} \ G_{t-s}(y-x)b\big(s, u_N(s,y)\big)\  dy \ ds\right\Vert_2\\
		&\qquad\qquad\leq\int\limits_0^t \int\limits_{\mathbb{R}} \ G_{t-s}(y-x) \left\Vert b_N\big(s, u_N(s,y)\big)-b\big(s, u_N(s,y)\big)\right\Vert_2 \ dy \ ds\\
		&\qquad\qquad\leq 2L_b\int\limits_0^t \int\limits_{\mathbb{R}} \ G_{t-s}(y-x)\sqrt{\mathbb{E}\big(1+|u_N(s,y)|^2; |u_N(s,y)|>e^N\big)}\  dy \ ds.
	\end{align*}
	Recall that if $X\geq 0$ is a random variable and $A>0,$ combining the Cauchy-Schwarz and Markov inequalities, one can show that $\mathbb{E}(X^2; X>A)\leq \sqrt{\mathbb{E}(X^4)\mathbb{P}(|X|>A)}\leq A^{-2}\mathbb{E}(X^4).$ Thus, using Proposition \ref{Prob4}-part 1 and Proposition \ref{Prob6}-part 1 we get
	\begin{equation}\label{lim-Tail}
		\lim\limits_{N\rightarrow\infty} \sup\limits_{t\in(0,T]}\sup\limits_{y\in\mathbb{R}}\mathbb{E}\big(1+|u_N(s,y)|^2; |u_N(s,y)|>e^N\big)=0,
	\end{equation}
	and hence 
	\begin{equation}\label{lim-difIbN}
		\mathcal{I}_{b_N}^N(t,x) - \int\limits_0^t \int\limits_{\mathbb{R}} \ G_{t-s}(y-x)b\big(s, u_N(s,y)\big)\  dy \ ds\rightarrow0 \ \text{in} \ L^2(\Omega)\ \text{as} \ N\rightarrow\infty.
	\end{equation}
	
	Because the function $b$ has at-most linear growth, using Minkowski inequality, we have that $\left\Vert b\big(s, u_N(s,y)\big)-b\big(s, u(s,y)\big)\right\Vert_2$ is bounded uniformly in $s\in(0,T]$, $y\in\mathbb{R}$ and $ N>0$  thanks to Proposition \ref{Prob4}--part 1. Next, using \eqref{lim-uN}, uniform integrability and the continuity of $b$, it follows that 
	
	$$
	\lim\limits_{N\rightarrow\infty}b\big(s, u_N(s,y)\big)=b\big(s, u(s,y)\big) \ \text{in} \ L^2(\Omega), \ \text{for all} \ s>0 \ \text{and} \  y\in\mathbb{R}.
	$$
	Then, by the Dominated Convergence Theorem, we get
	$$
	\lim\limits_{N\rightarrow\infty}\int\limits_0^t \int\limits_{\mathbb{R}} \ G_{t-s}(y-x) \left\Vert b_N\big(s, u_N(s,y)\big)-b\big(s, u_N(s,y)\big)\right\Vert_2 \ dy ds=0, 
	$$
	 for all $t\in(0,T]$ and $x\in\mathbb{R}$. Then an application of the triangle inequality shows that
	\begin{equation}\label{lim-Ib}
		\lim\limits_{N\rightarrow\infty}\int\limits_0^t \int\limits_{\mathbb{R}} \ G_{t-s}(y-x)b\big(s, u_N(s,y)\big)\  dy ds=\mathcal{I}_b(t,x) \qquad \forall t\in(0,T] \qquad \text{and} \ x\in\mathbb{R}.
	\end{equation}
	Thus the first assertion of \eqref{lim-IN} is proved.

	For the second assertion of  \eqref{lim-IN}, we apply similar analyses to the function $\sigma$ and its truncated part $\sigma_N$. Similarly to \eqref{b-bN}, we have
	\begin{align*}
		|\sigma_N(t,z)-\sigma(t,z)|\leq 2L_\sigma(1+|z|)\mathbf{1}_{\{|z|>e^N\}}(z) \ \text{for all} \ z\in\mathbb{R} \ \text{and}\ t,N>0.
	\end{align*}
	Then, the $L^2(\Omega)-$isometry of stochastic integrals yields
	\begin{align*}
		&\left\Vert\mathcal{I}_{\sigma_N}^N(t,x) - \int\limits_0^t \int\limits_{\mathbb{R}} \ G_{t-s}(y-x)\sigma\big(s, u_N(s,y)\big)W( dy ds)\right\Vert_2^2\\
		&\qquad\qquad\leq\int\limits_0^t \int\limits_{\mathbb{R}} \big[G_{t-s}(y-x)\big]^2 \left\Vert \sigma_N\big(s, u_N(s,y)\big)-\sigma\big(s, u_N(s,y)\big)\right\Vert_2^2 \ dy  ds \\
		&\qquad\qquad\leq 4L_\sigma^2\int\limits_0^t \int\limits_{\mathbb{R}} \big[G_{t-s}(y-x)\big]^2 \mathbb{E}\big(1+|u_N(s,y)|^2; |u_N(s,y)|>e^N\big)\ dy ds.
	\end{align*}
Again, Proposition \ref{Prob4}-part 1 and Proposition \ref{Prob6}-part 1 yield
	\begin{align*}
		\lim\limits_{N\rightarrow\infty} \sup\limits_{s\in(0,T]}\sup\limits_{y\in\mathbb{R}}\mathbb{E}\big(1+|u_N(s,y)|^2; |u_N(s,y)|>e^N\big)=0,
	\end{align*}
	
	and hence
	\begin{equation}\label{I-sg-N}
		\mathcal{I}_{\sigma_N}^N(t,x) - \int\limits_0^t \int\limits_{\mathbb{R}} \ G_{t-s}(y-x)\sigma\big(s, u_N(s,y)\big)W( dy ds)\rightarrow0 \ \text{in} \ L^2(\Omega) \ \text{as} \ N\rightarrow\infty.
	\end{equation}

The function $\sigma$ also has  at-most linear growth, so using Minkowski inequality, it follows that $\left\Vert \sigma\big(s, u_N(s,y)\big)-\sigma\big(s, u(s,y)\big)\right\Vert_2$ is bounded uniformly in $s\in(0,T]$, $y\in\mathbb{R}$ and $ N>0$  thanks to Proposition \ref{Prob4}--part 1. Next, using \eqref{lim-uN}, uniform integrability and the continuity of $\sigma$, we get
	
	$$
	\lim\limits_{N\rightarrow\infty}\sigma\big(s, u_N(s,y)\big)=\sigma\big(s, u(s,y)\big) \ \text{in} \ L^2(\Omega), \ \text{for all} \ s>0 \ \text{and} \  y\in\mathbb{R}.
	$$
	Then, by the Dominated Convergence Theorem, we have
	
		\begin{equation}\label{last-Int}
			\lim\limits_{N\rightarrow\infty}\left\Vert\mathcal{I}_\sigma(t,x) - \int\limits_0^t \int\limits_{\mathbb{R}} \ G_{t-s}(y-x)\sigma\big(s, u_N(s,y)\big)W( ds dy)\right\Vert_2^2\rightarrow0  
	\end{equation}

	for all $t\in(0,T]$ and $x\in\mathbb{R}$. Finally, an application of the triangle inequality shows that
	\begin{equation}\label{Int-eq-I-sg}
		\lim\limits_{N\rightarrow\infty}  \int\limits_0^t \int\limits_{\mathbb{R}} \ G_{t-s}(y-x)\sigma\big(s, u_N(s,y)\big)W( dy ds)= \mathcal{I}_\sigma(t,x),
	\end{equation}
	
	and the second assertion of \eqref{lim-IN} is proved. This concludes the proof.
	
	\subsubsection{Proof of existence when $\sigma\in L^\infty(\mathbb{R}_+\times\mathbb{R})$}
	The proof is very similar to the previous case following the same two steps with minor modifications. We provide the outline below. In Step 1,  first note that \eqref{Norm-Diff}, \eqref{I1-thm}
	and \eqref{I2-thm} remain valid. The same is true for \eqref{b-lower}. As for \eqref{bN-part2}, using  the second part of Proposition \ref{Prob6} instead leads to

	\begin{equation}\label{bN-part2bis}
		\begin{split}
			&\left\Vert \big[b_{N+1}\big(s,u_{N+1}(s,y)\big)-b_{N}\big(s,u_{N}(s,y)\big)\big]\mathbf{1}_{\Omega\setminus\mathcal{G}_{N+1}}(s,y)\right\Vert_k \\
			& \quad\leq 2L_be^{2L_bs}C_{\star}\sqrt{k}\Big({\|u_0\|}_{L^\infty(\mathbb{R})}+{\|\sigma\|}_{L^\infty(\mathbb{R_{+}}\times\mathbb{R})}+1\Big)\exp{-\frac{e^{2N-4L_bs}}{(2k)eC_{\star}^2\big({\|u_0\|}_{L^\infty(\mathbb{R})}+{\|\sigma\|}_{L^\infty(\mathbb{R_{+}}\times\mathbb{R})}+1\big)^2}},
		\end{split}
	\end{equation}
	which is valid uniformly for all $s\in(0,T]$, $y\in\mathbb{R}$, $N\geq 4\log C_0\vee C_{\#}^{\frac{2}{1-\beta/\alpha}}t\max\Big(2^{\frac{1}{1-\beta/\alpha}}L_\sigma^{\frac{2}{1-\beta/\alpha}}, L_b\Big)$ and $k\geq\max\big(1,\frac{1}{2}L_b^{1-\beta/\alpha}L_{\sigma}^{-2}\big)$.
	It follows that
	\begin{equation}\label{I1-SgbDd}
		\begin{split}
			I_1
			\leq & \text{L}_{N,b}\frac{e^{\gamma t}}{\gamma}\mathcal{N}_{k,\gamma,T}(u_{N+1}-u_N)
			+ 2L_be^{2L_bt}C_{\star}\sqrt{k}t\Big({\|u_0\|}_{L^\infty(\mathbb{R})}+{\|\sigma\|}_{L^\infty(\mathbb{R_{+}}\times\mathbb{R})}+1\Big)\\
			&\hspace{5.5cm}\times \exp{-\frac{e^{2N-4L_bt}}{(2k)eC_{\star}^2\big({\|u_0\|}_{L^\infty(\mathbb{R})}+{\|\sigma\|}_{L^\infty(\mathbb{R_{+}}\times\mathbb{R})}+1\big)^2}},
		\end{split}
	\end{equation}
	for all $t\in(0,T]$ and $x\in\mathbb{R}$, provided that  provided that $N\geq \max(c,c_T)$ and $k\geq c$, where the constants $c$ and $c_T$ are defined in \eqref{c} and \eqref{cT}, respectively.

	Next, noting that \eqref{sig-lower} remains valid, applying the second part of Proposition \ref{Prob6}, we get
	
	\begin{align*}
		\left\Vert \big[\sigma_{N+1}\big(s,u_{N+1}(s,y)\big)-\sigma_{N}\big(s,u_{N}(s,y)\big)\big]\mathbf{1}_{\Omega\setminus\mathcal{G}_{N+1}}(s,y)\right\Vert_k 
		 \leq 2K_0\exp{-\frac{e^{2N-4L_bs}}{(2k)eC_{\star}^2\big({\|u_0\|}_{L^\infty(\mathbb{R})}+{\|\sigma\|}_{L^\infty(\mathbb{R_{+}}\times\mathbb{R})}+1\big)^2}},
	\end{align*}
	uniformly for all $s\in(0,T]$, $y\in\mathbb{R}$, $N\geq 4\log C_0\vee C_{\#}^{\frac{2}{1-\beta/\alpha}}t\max\Big(2^{\frac{1}{1-\beta/\alpha}}L_\sigma^{\frac{2}{1-\beta/\alpha}}, L_b\Big)$, $k\geq\max\big(1,\frac{1}{2}L_b^{1-\beta/\alpha}L_{\sigma}^{-2}\big)$ and a positive constant $K_0$. This implies that
	
	\begin{equation}\label{I2-sgbDd}
		\begin{split}
			I_2^2
			\leq& \frac{8kC^{\star} \text{L}_{N,\sigma}^2\Gamma(1-\beta/\alpha)}{(2\gamma)^{1-\beta/\alpha}}e^{2\gamma t}\big[\mathcal{N}_{k, T,\gamma}(u_{N+1}-u_N)\big]^2\\
			&\qquad\qquad +\frac{16kK_0^2C^{\star}}{1-\beta/\alpha}\exp{-\frac{e^{2N-4L_bt}}{keC_{\star}^2\big({\|u_0\|}_{L^\infty(\mathbb{R})}+{\|\sigma\|}_{L^\infty(\mathbb{R_{+}}\times\mathbb{R})}+1\big)^2}}t^{1-\beta/\alpha},  
		\end{split} 
	\end{equation}
		for all $t\in(0,T]$ and $x\in\mathbb{R}$, provided that  provided that $N\geq \max(c,c_T)$ and $k\geq c$.
	
	Then, combining the bounds \eqref{I1-SgbDd} and \eqref{I2-sgbDd} leads to
	\begin{equation}\label{norm-sigCauchy}
		\begin{split}
			{\|u_{N+1}(t,x)-u_N(t,x)\|}_k&\leq  e^{\gamma t}\Bigg[\frac{\text{L}_{N,b}}{\gamma}+\frac{\sqrt{2^{2+\beta/\alpha}\Gamma(1-\beta/\alpha)k}\text{L}_{N,\sigma}}{\gamma^{\frac{1-\beta/\alpha}{2}}}\Bigg]\mathcal{N}_{k,\gamma,T}(u_{N+1}-u_N)\\
			&+ 2L_be^{2L_bt}C_{\star}\sqrt{k}t\Big({\|u_0\|}_{L^\infty(\mathbb{R})}+{\|\sigma\|}_{L^\infty(\mathbb{R_{+}}\times\mathbb{R})}+1\Big)\\
			&\qquad\qquad\qquad\times \exp{-\frac{e^{2N-4L_bt}}{2keC_{\star}^2\big({\|u_0\|}_{L^\infty(\mathbb{R})}+{\|\sigma\|}_{L^\infty(\mathbb{R_{+}}\times\mathbb{R})}+1\big)^2}}\\
			& +4K_0\sqrt{\frac{kC^{\star}}{1-\beta/\alpha}}\exp{-\frac{e^{2N-4L_bt}}{2keC_{\star}^2\big({\|u_0\|}_{L^\infty(\mathbb{R})}+{\|\sigma\|}_{L^\infty(\mathbb{R_{+}}\times\mathbb{R})}+1\big)^2}}t^{\frac{1-\beta/\alpha}{2}},  
		\end{split}
	\end{equation}
	provided that  $N\geq \max(c,c_T)$ and $k\geq c$.

	Next, making the same specific choices of $k$, $\gamma$ and $A_0$ as in \eqref{k-and-g} and \eqref{A0}, we get
	
	\begin{align*}
		\sup\limits_{x\in\mathbb{R}}{\|u_{N+1}(t,x)-u_N(t,x)\|}_c&\leq e^{\gamma t}\Bigg[\frac{\text{L}_{N,b}}{16\big[C^{\star}\Gamma(1-\beta/\alpha)\big]^{\frac{1}{1-\beta/\alpha}}A_0^{\frac{2}{1-\beta/\alpha}}c^{\frac{1}{1-\beta/\alpha}} \text{L}_{N,\sigma}^{\frac{2}{1-\beta/\alpha}}}+\frac{1}{A_0}\Bigg]\mathcal{N}_{c,\gamma,T}(u_{N+1}-u_N)\\\\
		& +\Bigg[2\sqrt{c}C_{\star}L_be^{2L_bt}t\big({\|u_0\|}_{L^\infty(\mathbb{R})}+{\|\sigma\|}_{L^\infty(\mathbb{R_{+}}\times\mathbb{R})}+1\big)
		+4K_0\sqrt{\frac{kC^{\star}}{1-\beta/\alpha}}t^{\frac{1-\beta/\alpha}{2}}
		\Bigg]\\
		&\qquad\qquad\times\exp{-\frac{e^{2N-4L_bt}}{2ceC_{\star}^2\big({\|u_0\|}_{L^\infty(\mathbb{R})}+{\|\sigma\|}_{L^\infty(\mathbb{R_{+}}\times\mathbb{R})}+1\big)^2}}\\
		&\leq e^{\gamma t}\Bigg[\frac{1}{16}+\frac{1}{4}\Bigg]\mathcal{N}_{c,\gamma,T}(u_{N+1}-u_N)\\
		& +\Bigg[2\sqrt{c}C_{\star}L_be^{2L_bt}t\big({\|u_0\|}_{L^\infty(\mathbb{R})}+{\|\sigma\|}_{L^\infty(\mathbb{R_{+}}\times\mathbb{R})}+1\big)
		+4K_0\sqrt{\frac{kC^{\star}}{1-\beta/\alpha}}t^{\frac{1-\beta/\alpha}{2}}
		\Bigg]\\
		&\qquad\qquad\times\exp{-\frac{e^{2N-4L_bt}}{2ceC_{\star}^2\big({\|u_0\|}_{L^\infty(\mathbb{R})}+{\|\sigma\|}_{L^\infty(\mathbb{R_{+}}\times\mathbb{R})}+1\big)^2}}\\
	\end{align*}
	uniformly for all $T>0, t\in(0,T],$ and $N\geq \max(N_0,c,c_T);$ Again, since $1/16+1/4<1/2,$ dividing both sides of the inequality above by $e^{\gamma t}$ and optimizing over $t\in(0,T]$, we get
	\begin{equation}\label{N-bounded-sg}
		\begin{split}
			\mathcal{N}_{c,\gamma,T}(u_{N+1}-u_N)\leq
			&2\Bigg[\sqrt{c}C_{\star}L_be^{2L_bT}T\big({\|u_0\|}_{L^\infty(\mathbb{R})}+{\|\sigma\|}_{L^\infty(\mathbb{R_{+}}\times\mathbb{R})}+1\big)
			+2K_0\sqrt{\frac{kC^{\star}}{1-\beta/\alpha}}T^{\frac{1-\beta/\alpha}{2}}
			\Bigg]\\
			&\qquad\qquad\times\exp{-\frac{e^{2N-4L_bT}}{2ceC_{\star}^2\big({\|u_0\|}_{L^\infty(\mathbb{R})}+{\|\sigma\|}_{L^\infty(\mathbb{R_{+}}\times\mathbb{R})}+1\big)^2}},\\
		\end{split}
	\end{equation}
	uniformly for all $T\in(0,T_0)$ and $N\geq \max(N_0,c,c_T)$. Note that we have used in \eqref{N-bounded-sg} the fact that $$t\mapsto\frac{e^{2N-4L_bt}}{2ceC_{\star}^2\big({\|u_0\|}_{L^\infty(\mathbb{R})}+{\|\sigma\|}_{L^\infty(\mathbb{R_{+}}\times\mathbb{R})}+1\big)^2}$$ is non-increasing.

 We now apply Proposition \ref{lm:2.5} in order to deduce from \eqref{N-bounded-sg} and \eqref{NT-norm} that, for every $T>0$ fixed,
	\begin{equation}\label{limSup-bd}
		\limsup\limits_{N\rightarrow\infty}e^{-2N}\log\sup\limits_{t\in(0,T)}\sup\limits_{x\in\mathbb{R}}{\|u_{N+1}(t,x)-u_N(t,x)\|}_{k}<0 \qquad\forall k\geq 1.
	\end{equation}
	And from there,  this proves that 
	\begin{equation}\label{sum-Lk-b}
		\sum\limits_{N=1}^\infty\sup\limits_{t\in(0,T]}   \sup\limits_{x\in\mathbb{R}}{\|u_{N+1}(t,x)-u_N(t,x)\|}_{k} <\infty \qquad \forall k\geq 1.
	\end{equation}
	which in turn proves that
	\begin{equation}\label{lim-uN-b}
		u(t,x)=\lim\limits_{N\rightarrow\infty}u_N(t,x) \ \text{exists in } \ L^{k}(\Omega),
	\end{equation}
	and the rate of convergence does not depend on $t\in(0,T]$ nor $x\in\mathbb{R}.$ As a direct consequence, $u$ is $L^k-$continuous.

	For Step 2,  we use the second parts of Propositions \ref{Prob4} and \ref{lm:2.5} to ensure that \eqref{lim-Tail} continues to hold. Therefore, \eqref{lim-difIbN}, \eqref{lim-Ib}, \eqref{I-sg-N}, \eqref{Int-eq-I-sg} all remain true. Thus, \eqref{last-Int} also holds for all $t\in(0,T]$  and $x\in\mathbb{R}.$ Finally, the triangle inequality yields \eqref{lim-Ib} which proves the second assertion of \eqref{lim-IN} and completes this proof.
	\subsection{Proof of uniqueness}
	Let $u,v$ be two solutions of \eqref{MainEq} with the same initial condition $u_0$ satisfying Assumption \ref{As1}. Recall that $u$ and $v$ satisfy the mild solution formulation \eqref{MildSol} with the functions $b$ and $\sigma$ satisfying Assumption \ref{As2}.  Next, using the Burkh\"older-Davis-Gundy inequality, we find that
	\begin{equation}\label{U-V}
		{\|u(t,x)-v(t,x)\|}_k\leq I_1+I_2+I_3+I_4 \ \text{for all} \ t>0, x\in\mathbb{R} \ \text{and} \ k\geq 1,
	\end{equation}
	where
	\begin{equation}\label{I1}
		I_1=\int\limits_0^t\int\limits_{\mathbb{R}}G_{t-s}(y-x)\left \Vert \Big[b\big(s,u(s,y)\big)-b\big(s,v(s,y)\big)\Big]\mathbf{1}_{A_N(s,y)}\right\Vert_k ds\ dy, 
	\end{equation}
	\begin{equation}\label{I2}
		I_2^2=8k\int\limits_0^t\int\limits_{\mathbb{R}}\big[G_{t-s}(y-x)\big]^2\left\Vert\Big[\sigma\big(s,u(s,y)\big)-\sigma\big(s,v(s,y)\big)\Big]\mathbf{1}_{A_N(s,y)}\big]   \right\Vert_k^2ds\ dy,
	\end{equation}
	\begin{equation}\label{I3}
		I_3=\int\limits_0^t\int\limits_{\mathbb{R}}G_{t-s}(y-x)\left \Vert \Big[b\big(s,u(s,y)\big)-b\big(s,v(s,y)\big)\Big]\mathbf{1}_{\Omega\setminus A_N(s,y)}\right\Vert_k ds\ dy, 
	\end{equation}
	and 
	\begin{equation}\label{I4}
		I_4^2=8k\int\limits_0^t\int\limits_{\mathbb{R}}\big[G_{t-s}(y-x)\big]^2\left\Vert\Big[\sigma\big(s,u(s,y)\big)-\sigma\big(s,v(s,y)\big)\Big]\mathbf{1}_{\Omega\setminus A_N(s,y)}\big]   \right\Vert_k^2ds\ dy,
	\end{equation}
	and for $N,s>0$ and $y\in\mathbb{R},$
	$$
	A_N(s,y)=\{\omega\in\Omega: |u(s,y)|(\omega)\leq e^N, |v(s,y)|(\omega)\leq e^N\}.
	$$
	
Note that the moment bounds obtained in Proposition \ref{Prob4} only use the linear growth constants $L_\sigma$ and $L_b$. Therefore, they continue to hold when $u_N$ is replaced by $u$ or $v.$ Moreover, the tail estimates obtained in Proposition \ref{Prob6} also hold when $u_{N+1}$ is replaced by $u$ or $v$.
Therefore, we can proceed as in the proof of existence but appeal to the local Lipschitz conditions on $b$ and $\sigma$, i.e we recall \eqref{LN} and write 
\begin{align*}
	\left \Vert \Big[b\big(s,u(s,y)\big)-b\big(s,v(s,y)\big)\Big]\mathbf{1}_{A_N(s,y)}\right\Vert_k\leq \text{L}_{N,b} \left \Vert u(s,y)-v(s,y)\right\Vert_k
\end{align*}
and 
\begin{align*}
	\left \Vert \Big[\sigma\big(s,u(s,y)\big)-\sigma\big(s,v(s,y)\big)\Big]\mathbf{1}_{A_N(s,y)}\right\Vert_k\leq \text{L}_{N,\sigma} \left \Vert u(s,y)-v(s,y)\right\Vert_k.
\end{align*}
	\subsubsection{Proof of uniqueness when $L_\sigma>0$}

	Since $\mathbb{P}\big(\Omega\setminus A_N(s,y)\big)\leq \mathbb{P}\big(|u(t,x)|\geq e^N\big)+\mathbb{P}\big(|v(t,x)|\geq e^N\big)$, similar computations to those in the proof of existence lead to, for all $T>0$,
	\begin{align*}
			&{\|u(t,x)-v(t,x)\|}_k\leq e^{\gamma t}\Big[\frac{\text{L}_{N,b}}{\gamma}+\frac{2^{1+\beta/(2\alpha)}\sqrt{kC^{\star}\Gamma(1-\beta/\alpha)}\text{L}_{N,\sigma}}{\gamma^{\frac{1-\beta/\alpha}{2}}}\Big]\mathcal{N}_{k,\gamma,T}(u_{N+1}-u_N)\\
			& \qquad+\frac{2C_0L_b}{4(C_{\#}L_\sigma)^{\frac{2}{1-\beta/\alpha}}k^{\frac{1}{1-\beta/\alpha}}}\exp{4(C_{\#}L_\sigma)^{\frac{2}{1-\beta/\alpha}}k^{\frac{1}{1-\beta/\alpha}} t-\frac{N^{2-\beta/\alpha}}{2k(C_{\#}L_\sigma)^2 (8t)^{1-\beta/\alpha}}}\\
			&\qquad+ \frac{4\sqrt{kC^{\star}}C_0L_\sigma}{\sqrt{1-\beta/\alpha}}\exp{4(C_{\#}L_\sigma)^{\frac{2}{1-\beta/\alpha}}k^{\frac{1}{1-\beta/\alpha}} t}\exp{-\frac{N^{2-\beta/\alpha}}{2k(C_{\#}L_\sigma)^2 (8t)^{1-\beta/\alpha}}}t^{\frac{1-\beta/\alpha}{2}}
	\end{align*}
	uniformly for all $\gamma>0, t\in(0,T], x\in\mathbb{R,}$ and  $N\geq\max(c,c_T)$ and $k\geq c$, where $C_0$ and $C_{\#}$ were defined in Proposition \ref{Prob4}, $c$ and $c_T$ were defined in \eqref{cT} and \eqref{c}, respectively. Then choosing the parameters $k$ and $\gamma$ as in \eqref{k-and-g} and proceeding exactly as in the proof of existence, we find that 
	\begin{equation*}
		\sum\limits_{N=1}^\infty\sup\limits_{t\in(0,T]}   \sup\limits_{x\in\mathbb{R}}{\|u(t,x)-v(t,x)\|}_{2} <\infty \qquad \forall T> 0.
	\end{equation*}
	But $\sup\limits_{t\in(0,T]} \sup\limits_{x\in\mathbb{R}}{\|u(t,x)-v(t,x)\|}_{2}$ does not depend on $N$, therefore it must be the case that $$\sup\limits_{t\in(0,T]}   \sup\limits_{x\in\mathbb{R}}{\|u(t,x)-v(t,x)\|}_{2}=0$$ and this concludes the proof.

\subsubsection{Proof of uniqueness when $\sigma\in L^\infty(\mathbb{R}_+\times\mathbb{R})$}
	The proof in this case is also very similar to the case $L_\sigma>0$ with minor modifications. Similar computations to those in the proof of existence lead to 
	
	\begin{align*}
		{\|u(t,x)-v(t,x)\|}_k&\leq  e^{\gamma t}\Bigg[\frac{\text{L}_{N,b}}{\gamma}+\frac{\sqrt{2^{2+\beta/\alpha}\Gamma(1-\beta/\alpha)k}\text{L}_{N,\sigma}}{\gamma^{\frac{1-\beta/\alpha}{2}}}\Bigg]\mathcal{N}_{k,\gamma,T}(u_{N+1}-u_N)\\
		&+ 2L_be^{2L_bt}C_{\star}\sqrt{k}t\Big({\|u_0\|}_{L^\infty(\mathbb{R})}+{\|\sigma\|}_{L^\infty(\mathbb{R_{+}}\times\mathbb{R})}+1\Big)\\
		&\qquad\qquad\qquad\times \exp{-\frac{e^{2N-4L_bt}}{2keC_{\star}^2\big({\|u_0\|}_{L^\infty(\mathbb{R})}+{\|\sigma\|}_{L^\infty(\mathbb{R_{+}}\times\mathbb{R})}+1\big)^2}}\\
		& +4K_0\sqrt{\frac{kC^{\star}}{1-\beta/\alpha}}\exp{-\frac{e^{2N-4L_bt}}{2keC_{\star}^2\big({\|u_0\|}_{L^\infty(\mathbb{R})}+{\|\sigma\|}_{L^\infty(\mathbb{R_{+}}\times\mathbb{R})}+1\big)^2}}t^{\frac{1-\beta/\alpha}{2}},  
	\end{align*}
	
which is valid	uniformly for all $\gamma>0, t\in(0,T], x\in\mathbb{R,}$ and  $N\geq\max(c,c_T)$ and $k\geq c$, where $C_{\star}$ was defined in Proposition \ref{Prob4}, $c$ and $c_T$ were defined in \eqref{cT} and \eqref{c}, respectively.
Again,  choosing the parameters $k$ and $\gamma$ as in \eqref{k-and-g} and proceeding exactly as in the proof of existence, we find that 
	\begin{equation*}
		\sum\limits_{N=1}^\infty\sup\limits_{t\in(0,T]}   \sup\limits_{x\in\mathbb{R}}{\|u(t,x)-v(t,x)\|}_{2} <\infty \qquad \forall T> 0.
	\end{equation*}
	Again, because $\sup\limits_{t\in(0,T]}   \sup\limits_{x\in\mathbb{R}}{\|u(t,x)-v(t,x)\|}_{2}$ does not depend on $N$, it must be the case that $$\sup\limits_{t\in(0,T]} \sup\limits_{x\in\mathbb{R}}{\|u(t,x)-v(t,x)\|}_{2}=0.$$ This finally completes the proof of uniqueness. 


	\Addresses
	
\end{document}